
\documentclass[draft,onecolumn,12pt]{IEEEtran}

\usepackage{amssymb}
\usepackage{amsmath}
\usepackage{amsthm}

\newtheorem{fed}{Definition}[section]
\newtheorem{teo}[fed]{Theorem}
\newtheorem{cor}[fed]{Corollary}
\newtheorem*{teo*}{Theorem}

\newtheorem{prop}[fed]{Proposition}
\newtheorem{defi}[fed]{Definition}
\theoremstyle{definition}
\newtheorem{rem}[fed]{Remark}

\newtheorem{exa}[fed]{Example}
\newtheorem{exas}[fed]{Examples}


\oddsidemargin 0in \topmargin -0.5in \textwidth 16.5truecm
\textheight 23truecm

\def\bdem{\begin{proof}}
\def\edem{\renewcommand{\qed}{\hfill $\blacksquare$}
\end{proof}}
\def\AS{[\ese]A}

\def\cW{\mathcal{W}}
\def\cR{\mathcal{R}}

\def\ese{\mathcal{S}}

\def\L{\mathcal{L}}

\def\cF{\mathcal{F}}
\def\cH{\mathcal{H}}
\def\cK{\mathcal{K}}

\def\ese{\mathcal{S}}
\def\ete{\mathcal{T}}
\def\eme{\mathcal{M}}

\newcommand{\pint}[1]{\displaystyle \left \langle #1 \right\rangle}

\begin{document}

\title{Range additivity, shorted operator and the Sherman-Morrison-Woodbury formula}

\author{M. Laura Arias, Gustavo Corach and Alejandra Maestripieri

\thanks{M. Laura Arias is with Instituto Argentino de Matem\'atica ``Alberto P. Calder\'on'',  Buenos Aires, Argentina (e-mail: lauraarias@conicet.gob.ar).

 Gustavo Corach and Alejandra Mestripieri  are with Instituto Argentino de Matem\'atica ``Alberto P. Calder\'on'' and  Dpto. de Matem\'atica, Facultad de Ingenier\'{i}a, Universidad de Buenos Aires, Buenos Aires, Argentina (e-mail: gcorach@fi.uba.ar, amaestri@fi.uba.ar).

Partially supported by UBACYT 20020100100250; 2011-2014}
}

\maketitle

\begin{abstract} We say that two operators $A, B$ have the range additivity property if $R(A+B)=R(A)+R(B).$ In this article we study the relationship between range additivity, shorted operator and certain Hilbert space decomposition known as compatibility. As an application, we extend to infinite dimensional Hilbert space operators a formula by Fill and Fishkind related to the well-known Sherman-Morrison-Woodbury formula. 
\end{abstract}

\section{Introduction}

In this paper we explore some results implied by range additivity of operators in a  Hilbert space $\cH$. Let $L(\cH)$ be the algebra of bounded linear operators on $\cH$ and $L(\cH)^+$ the cone of positive operators on $\cH.$ Consider the set 
$$\cR:=\{(A,B): A,B\in L(\cH) \; {\rm and} \; R(A+B)=R(A)+R(B) \},$$
where $R(T)$ denotes the range of $T$. If $(A,B)\in\cR$ we say that $A,B$ satisfy the {\it range additivity property}. On the other side, we say that a positive operator $A\in L(\cH)^+$ and a closed subspace $\ese\subseteq \cH$ are {\it compatible} if $\ese+(A\ese)^\bot=\cH;$ in \cite{GMS3} it is shown that $A,\ese$ are compatible if and only if there exists an idempotent operator $E\in L(\cH)$ such that $R(E)=\ese$ and $E$ is $A$-selfadjoint, in the sense that $\pint{Ex,y}_A=\pint{x, Ey}_A$ for $x,y\in\cH,$ where $\pint{x,y}_A=\pint{Ax,y}.$ Notice that $||x||_A=\pint{x,x}_A^{1/2}$ is  a seminorm, and that $E$ behaves, with respect to this seminorm, as an orthogonal projection. So, $A$ and $\ese$ are compatible if there is an $A$-orthogonal projection onto $\ese.$ One of the main results of the paper is that $A,\ese$ are compatible if and only if $(A, I-P_\ese)\in\cR$, where $P_\ese$ denotes the classical orthogonal projection onto $\ese.$ Indeed, this is a corollary of the following theorem: for $A,B\in L(\cH)^+$ such that $R(B)$ is closed, then $(A,B)\in \cR$ if and only if $A$ and $N(B)$ are compatible (Theorem \ref{Rcompa}). In order to prove this assertion, and some other general facts on range additivity and compatibility, we explore some features of the shorted operator $\AS.$ This operator has been defined by M. G. Krein \cite{K} as  
$$[\ese]A : =\max \{X \in L(\cH)^+ : X \leq A \;  {\rm and} \;  R(X) \subseteq \ese\}.$$
He proved that the maximum for the L$\rm \ddot{o}$wner ordering (i.e., $C\leq D$ if $\pint{C\xi,\xi}\leq \pint{D\xi,\xi}$ for every $\xi\in\cH$) exists and he applied this construction for a parametrization of the selfadjoint extensions of semi-bounded operators.  W. N. Anderson and G. E. Trapp \cite{AT} redefined and studied this operator, which can be used in the mathematical study of electrical networks. Here, we use the properties of the shorted operator in order to prove that, for $A, B\in L(\cH)^+$ such that $R(B)$ is closed, it holds that $(A,B)\in\cR$ if and only if $A, N(B)$ are compatible where $N(B)$ denotes the nullspace of $B.$ In particular, for $B=I-P_\ese$ we get the assertion above. However, this is not the first manifestation of a relationship between compatibility of $A,\ese$ and properties of $\AS.$ In fact, Anderson and Trapp \cite{AT} prove that $\AS$ is the infimum, for the L$\rm \ddot{o}$wner ordering , of the set $\{EAE^*: E\in L(\cH), E^2=E, N(E)=\ese^\bot\}.$ In \cite[Prop. 4.2]{GMS3}, \cite[Prop. 3.4]{CMS0} it is proven that the infimum is attained if and only if $A,\ese$ are compatible. Moreover, it is proven that if $E\in L(\cH)$ is an idempotent operator such that $AE=E^*A$ and $R(E)=\ese$, then $[\ese^\bot]A=A(I-E).$ Here, we explore more carefully the properties of $\AS$ which are relevant for the compatibility of $A, \ese.$ Another result which may be relevant for updating theory is the extension of the well-known theorem  by J. A. Fill and D. E. Fishkind \cite{FF} which says that, for $n\times n$ complex matrices $A,B$ such that $rk(A+B)=rk(A)+rk(B)$ it holds that  $(A+B)^\dagger=(I-S)A^\dagger (I-T)+SB^\dagger T,$ where $^\dagger$ denotes the Moore-Penrose inverse, 
 $S=(P_{N(B)^\bot}P_{N(A)})^\dagger$ and $T=(P_{N(A^*)}P_{N(B^*)^\bot})^\dagger.$ Here, $rk(X)$ denotes the rank of the matrix $X$ and $P_\eme$ is the orthogonal projection onto the subspace $\eme.$ This is a generalization of a famous formula by J. Sherman, W. J. Morrison and M. A. Woodbury. For a history of this formula see \cite{Ha}. Of course, for Hilbert space operators the rank hypothesis must be replaced by a different one. Since it is well-known that $rk(A+B)=rk(A)+rk(B)$ if and only if $R(A)\cap R(B)=\{0\}$ and $R(A^*)\cap R(B^*)=\{0\},$ we prove that Fill-Fishkind formula holds for $A,B\in L(\cH)$ such that $R(A)$ and $R(B)$ are closed, $R(A)\cap R(B)=R(A^*)\cap R(B^*)=\{0\}$ and $(A,B), (A^*, B^*)\in\cR.$

We end this section introducing some notation. The direct sum between two closed subspaces $\ese$ and $\ete$ will be denoted by $\ese\overset{.}{+}\ete.$ If $\cH=\ese\overset{.}{+}\ete$ then $Q_{\ese//\ete}$ denotes the oblique projection with range $\ese$ and kernel $\ete.$

\section{Range additivity}

Let $\cH,\cK$ be Hilbert spaces. We say that $A,B\in L(\cH,\cK)$ have the {\it range additivity property} if $R(A+B)=R(A)+R(B)$. We denote by $\cR$ the set of all these pairs $(A,B)$, i.e.,
$$\cR:=\{(A,B): A,B\in L(\cH,\cK) \; {\rm and} \; R(A+B)=R(A)+R(B) \}.$$ We collect first some trivial or well-known facts about $\cR.$

\begin{prop}
Let $A,B\in L(\cH,\cK).$ Then
\begin{enumerate}
\item $(A,B)\in\cR$ if and only if $(B,A)\in \cR.$
\item If $R(A)=\cK$ and $A=C+D$ for some $C,D\in L(\cH,\cK)$ then $(C,D)\in \cR.$
\item If $\cH=\cK$ is finite dimensional and $A,B\in L(\cH)^+$ then $(A,B)\in \cR.$
\item If $\cH=\cK,$ $A,B\in L(\cH)^+$ and $R(A+B)$ or $R(A)+R(B)$ is closed, then $(A,B)\in\cR;$ in particular, if $A,B\in L(\cH)^+$, $R(A)$ is closed and $\dim R(B)<\infty$ then $(A,B)\in\cR.$
\end{enumerate}
\end{prop}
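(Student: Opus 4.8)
The plan is to treat the four items separately, with only the last requiring real work. Item (1) is immediate from the symmetry of both sides: $R(A+B)=R(B+A)$ and $R(A)+R(B)=R(B)+R(A)$. For item (2) I would use the inclusion $R(C+D)\subseteq R(C)+R(D)$, which always holds since $(C+D)x=Cx+Dx$; then $\cK=R(A)=R(C+D)\subseteq R(C)+R(D)\subseteq\cK$ forces equality, so $(C,D)\in\cR$.

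For item (3) the key observation is the nullspace identity $N(A+B)=N(A)\cap N(B)$, valid for any $A,B\in L(\cH)^+$: if $(A+B)x=0$ then $\langle Ax,x\rangle+\langle Bx,x\rangle=0$ with both summands nonnegative, so $A^{1/2}x=B^{1/2}x=0$ and hence $Ax=Bx=0$; the reverse inclusion is trivial. In finite dimensions every self-adjoint operator $T$ satisfies $R(T)=N(T)^\bot$ and sums of subspaces are automatically closed, so $R(A+B)=N(A+B)^\bot=(N(A)\cap N(B))^\bot=N(A)^\bot+N(B)^\bot=R(A)+R(B)$, which is exactly $(A,B)\in\cR$.

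Item (4) is the substantial one. First I would upgrade the nullspace identity to the closure statement $\overline{R(A+B)}=\overline{R(A)+R(B)}$: since $A+B,A,B$ are self-adjoint, $\overline{R(A+B)}=N(A+B)^\bot=(N(A)\cap N(B))^\bot=\overline{N(A)^\bot+N(B)^\bot}=\overline{R(A)+R(B)}$. Granting this, the case in which $R(A+B)$ is closed is immediate by squeezing: $R(A+B)=\overline{R(A+B)}=\overline{R(A)+R(B)}\supseteq R(A)+R(B)\supseteq R(A+B)$, forcing $(A,B)\in\cR$.

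The remaining case, $R(A)+R(B)=:M$ closed, is where I expect the main obstacle, since the closure identity alone only gives $\overline{R(A+B)}=M$ and not the desired equality $R(A+B)=M$. Here I would first reduce to $M$: because $M^\bot=N(A)\cap N(B)$, both $A$ and $B$ annihilate $M^\bot$ and (being self-adjoint) leave $M$ invariant, so I may regard $A,B$ as positive operators on $M$ with $R(A)+R(B)=M$ and $N(A)\cap N(B)=\{0\}$. Writing $T=[A\ B]\colon M\oplus M\to M$, one has $R(T)=R(A)+R(B)=M$ and $TT^*=A^2+B^2$, so $A^2+B^2$ is a surjective positive operator on $M$, hence bounded below by some $\varepsilon>0$. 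The crux is to transfer this lower bound from $A^2+B^2$ to $A+B$: if $A+B$ were not bounded below there would be unit vectors $x_n$ with $\langle(A+B)x_n,x_n\rangle\to0$, forcing $\|A^{1/2}x_n\|\to0$ and $\|B^{1/2}x_n\|\to0$, whence $\|Ax_n\|\le\|A^{1/2}\|\,\|A^{1/2}x_n\|\to0$ and likewise $\|Bx_n\|\to0$, contradicting $\langle(A^2+B^2)x_n,x_n\rangle=\|Ax_n\|^2+\|Bx_n\|^2\ge\varepsilon$. Thus $A+B$ is bounded below on $M$, so $R(A+B)=M=R(A)+R(B)$ and $(A,B)\in\cR$. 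Finally, the ``in particular'' clause follows because the sum of the closed subspace $R(A)$ and the finite-dimensional subspace $R(B)$ is closed, placing us in the case just treated.
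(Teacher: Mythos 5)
Your proof is correct, and it takes a genuinely different route: the paper disposes of the proposition almost entirely by citation, declaring items 1 and 2 trivial, quoting Fillmore--Williams \cite{FW} (and \cite{ACG1} for the version without closed-range hypotheses) for item 4, and deducing item 3 from item 4. You instead give a self-contained argument. For item 3 you argue directly from $N(A+B)=N(A)\cap N(B)$ and finite-dimensional orthocomplementation, which is fine (the paper goes the other way, via item 4). For the substantial case of item 4 with $M=R(A)+R(B)$ closed, the cited proofs run through Crimmins' identity $R(A^{1/2})+R(B^{1/2})=R((A+B)^{1/2})$ together with the fact that a positive operator and its square root have the same range exactly when that range is closed; your argument replaces this with the reduction to $M$, the observation that $TT^*=A^2+B^2$ is bounded below on $M$ because $T=[A\ B]$ is surjective, and the transfer of the lower bound from $A^2+B^2$ to $A+B$ via $\|Ax\|\le\|A^{1/2}\|\,\|A^{1/2}x\|$. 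All steps check out (in particular, a positive operator on $M$ that is bounded below is invertible, so $R(A+B)=M$), and your version has the merit of proving the general statement --- no closedness of $R(A)$ or $R(B)$ assumed --- without appealing to Crimmins' identity, at the cost of a longer argument than the one-line citation the paper offers.
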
  
\begin{proof}
Items 1 and 2 are trivial. Item 4 has been proven by Fillmore and Williams \cite[Corollary 3]{FW} under the additional hypothesis that $R(A)$ and $R(B)$ are closed. In \cite[Theorem 3.3]{ACG1} there is a proof without these hypothesis. Items 3 follows from item 4.
\end{proof}

\begin{prop}\label{sad}
For $A,B\in L(\cH,\cK)$ consider the following conditions:
\begin{enumerate}
\item $\overline{R(A^*)}\overset{.}{+}\overline{R(B^*)}$ is closed.
\item $N(A)+N(B)=\cH$
\item $(A,B)\in \cR.$
\end{enumerate}
Then, the next implications hold: $1\Leftrightarrow 2\Rightarrow 3.$ The converse $3\Rightarrow 2$ holds if $R(A)\cap R(B)=\{0\}.$
\end{prop}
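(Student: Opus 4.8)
The plan is to treat the two implications involving condition 3 by direct manipulation of ranges and nullspaces, and to obtain the equivalence $1\Leftrightarrow 2$ by passing to orthogonal complements. Throughout I will use $N(A)^\bot=\overline{R(A^*)}$ and $N(B)^\bot=\overline{R(B^*)}$, and I read condition 1 as carrying two pieces of information: that the sum $\overline{R(A^*)}+\overline{R(B^*)}$ is direct (i.e. $\overline{R(A^*)}\cap\overline{R(B^*)}=\{0\}$) and that it is closed.

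For $2\Rightarrow 3$ the inclusion $R(A+B)\subseteq R(A)+R(B)$ is automatic. For the reverse inclusion I would take an arbitrary $Au+Bv\in R(A)+R(B)$ and seek $w\in\cH$ with $w-u\in N(A)$ and $w-v\in N(B)$; such a $w$ satisfies $(A+B)w=Aw+Bw=Au+Bv$. The existence of $w$ amounts to $(u+N(A))\cap(v+N(B))\neq\emptyset$, which reduces to $u-v\in N(A)+N(B)$, and this holds for every $u,v$ precisely because $N(A)+N(B)=\cH$. Hence $R(A)+R(B)\subseteq R(A+B)$ and $(A,B)\in\cR$. For the converse $3\Rightarrow 2$ under $R(A)\cap R(B)=\{0\}$, given $x\in\cH$ I would note $Ax\in R(A)\subseteq R(A)+R(B)=R(A+B)$, so there is $y$ with $(A+B)y=Ax$. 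Rearranging yields $A(y-x)=-By\in R(A)\cap R(B)=\{0\}$, whence $By=0$ and $A(y-x)=0$. Then $x=y-(y-x)$ displays $x$ as an element of $N(B)$ plus an element of $N(A)$, giving $N(A)+N(B)=\cH$.

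For $1\Leftrightarrow 2$ I would split condition 2 into a density part and a closedness part. Taking orthogonal complements, $\overline{N(A)+N(B)}=(\overline{R(A^*)}\cap\overline{R(B^*)})^\bot$, so $N(A)+N(B)$ is dense in $\cH$ if and only if $\overline{R(A^*)}\cap\overline{R(B^*)}=\{0\}$, which is exactly the directness in condition 1. The residual content of condition 2 is then that $N(A)+N(B)$ is closed, while the residual content of condition 1 is that $\overline{R(A^*)}+\overline{R(B^*)}$ is closed. These two closedness statements coincide by the classical duality result that, for closed subspaces $\eme,\ene$ of a Hilbert space, $\eme+\ene$ is closed if and only if $\eme^\bot+\ene^\bot$ is closed, applied with $\eme=N(A)$ and $\ene=N(B)$. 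Matching the two pieces yields $1\Leftrightarrow 2$.

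I expect the main obstacle to be this last equivalence, specifically the invocation (or, if a self-contained argument is preferred, the reproof) of the subspace-closedness duality; the two implications touching condition 3 are routine once the correct element $w$, respectively $y$, is produced.
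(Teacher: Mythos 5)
Your proof is correct. Note that the paper does not actually prove this proposition in-line: it cites an external reference ([ACG, Prop.\ 5.8]) and points to Theorem \ref{Alejandra} as a more general result. Your argument is therefore worth comparing with that theorem rather than with a proof the paper omits. For $2\Rightarrow 3$ you produce, for each $Au+Bv$, a single preimage $w\in(u+N(A))\cap(v+N(B))$, the intersection being nonempty exactly because $u-v\in N(A)+N(B)=\cH$; and for $3\Rightarrow 2$ under $R(A)\cap R(B)=\{0\}$ you solve $(A+B)y=Ax$ and split $x=y-(y-x)$ with $y\in N(B)$, $y-x\in N(A)$. This is a clean, purely algebraic, element-level argument, and it recovers directly the special case that Theorem \ref{Alejandra} obtains through the heavier machinery of the identity $\cH=A^{-1}(R(B))+B^{-1}(R(A))$ together with $R(A)\cap R(B)\subseteq R(A+B)$; the paper's route buys the general criterion without the disjointness hypothesis, yours buys brevity and transparency in the stated case. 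Your $1\Leftrightarrow 2$ is the standard one: reading condition 1 as the conjunction of directness and closedness of $\overline{R(A^*)}+\overline{R(B^*)}=N(A)^\bot+N(B)^\bot$, the directness matches the density of $N(A)+N(B)$ by orthogonal complementation, and the two closedness statements match by the duality theorem that $\eme+\ene$ is closed iff $\eme^\bot+\ene^\bot$ is closed --- precisely the result ([Deu, Theorem 13]) the paper invokes for the analogous equivalences elsewhere (e.g.\ in Theorem \ref{Rcompa}). All three steps are sound; no gaps.
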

\begin{proof}
See \cite[Prop. 5.8]{ACG}. For more general results Corollary \ref{ecua} and Theorem \ref{Alejandra}.
\end{proof}

\begin{exas}\label{ejemplo}
\begin{enumerate}
\item Consider $A=\left(
\begin{array}[pos]{cc}
1 & 1\\
1 & 1	
\end{array}
\right)$ and 
$B=\left(
\begin{array}[pos]{cc}
1 & 0\\
1 & 0	
\end{array}
\right).$ Clearly, $(A,B)\in\cR$ but $(A^*,B^*)\notin \cR.$
\item There exist $C,D\in L(\cH)^+$ such that $R(C),R(D)$ are dense and $(C,D)\notin \cR.$ For this, consider $C,D\in L(\cH)^+$ with dense ranges such that $R(C)\cap R(D)=\{0\}$ (see \cite{FW}). Hence, as $N(C)+N(D)=\{0\}\neq \cH$ then, by Proposition \ref{sad}, $(C,D)\notin \cR.$ 
\end{enumerate}
\end{exas}

We collect now some useful characterizations of $\cR.$ Notice that the proof holds also for vector spaces and modules over a ring.

\begin{prop}\label{sumarangos}
Given $A,B\in L(\cH),$ the following conditions are equivalent:
\begin{enumerate}
\item $(A,B)\in \cR,$
\item $R(A)\subseteq R(A+B)$,
\item $R(B)\subseteq R(A+B),$
\item $R(A-B)\subseteq R(A+B).$
\end{enumerate}
\end{prop}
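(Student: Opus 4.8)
The plan is to reduce everything to elementary range-chasing, exploiting the single structural fact that each of the operators $A$, $B$, $A-B$ is a fixed linear combination of $A+B$ and one of the others. First I would record the inclusion $R(A+B)\subseteq R(A)+R(B)$, which is immediate since $(A+B)x=Ax+Bx$ for every $x\in\cH$. Consequently condition $(1)$ is equivalent to the reverse inclusion $R(A)+R(B)\subseteq R(A+B)$, and it is this reformulation that I would carry through the whole argument; no analytic information (closedness, positivity, topology) enters, which is exactly why the remark about vector spaces and modules applies.

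Next I would prove the cycle $(1)\Rightarrow(2)\Rightarrow(3)\Rightarrow(1)$. The implication $(1)\Rightarrow(2)$ is trivial because $R(A)\subseteq R(A)+R(B)=R(A+B)$. For $(2)\Rightarrow(3)$ I would use $B=(A+B)-A$: for every $x$ one has $Bx=(A+B)x-Ax$, and since $(A+B)x\in R(A+B)$ while $Ax\in R(A)\subseteq R(A+B)$ by hypothesis, it follows that $Bx\in R(A+B)$, whence $R(B)\subseteq R(A+B)$. Symmetrically, the identity $A=(A+B)-B$ gives $(3)\Rightarrow(2)$; and either of $(2)$, $(3)$ yields $R(A)+R(B)\subseteq R(A+B)$, which together with the automatic inclusion recovers $(1)$.

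Finally I would attach $(4)$ to this chain. Assuming $(2)$ and $(3)$ (equivalently $(1)$), I would write $(A-B)x=Ax-Bx$ with both summands in $R(A+B)$ to obtain $(4)$. For the converse $(4)\Rightarrow(2)$ I would use $2A=(A+B)+(A-B)$, so that $Ax=\tfrac12\big((A+B)x+(A-B)x\big)\in R(A+B)$ as soon as $R(A-B)\subseteq R(A+B)$, which closes the equivalence.

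I do not expect a genuine obstacle here, since the statement is purely algebraic. The only point deserving a moment's care is the step $(4)\Rightarrow(2)$, where the factor $\tfrac12$ appears: over a real or complex Hilbert space this is harmless, and it is precisely the invertibility of $2$ (the absence of characteristic two) that the module version of this last implication would require.
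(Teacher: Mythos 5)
Your proof is correct and follows essentially the same route as the paper's: both arguments are purely algebraic range-chasing based on writing each of $A$, $B$, $A-B$ as a linear combination of $A+B$ and one of the others (the paper proves $1\Rightarrow 2,3$, then $2\Rightarrow 3\Rightarrow 4\Rightarrow 1$ using the identities $Bx=(A+B)x-Ax$, $(A-B)x=(A+B)x-2Bx$, $2Ax=(A-B)x+(A+B)x$). Your closing observation about the invertibility of $2$ is a pertinent refinement of the paper's remark that the proof carries over to modules over a ring, since the paper's own step $4\Rightarrow 1$ also divides by $2$.
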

\begin{proof}
$1\Rightarrow 2, 3.$ If $R(A+B)=R(A)+R(B),$ then,  a fortiori, $R(A)\subseteq R(A+B)$ and $R(B)\subseteq R(A+B).$

$2\Rightarrow 3.$ For every $x\in\cH,$ $Bx=(A+B)x-Ax\in R(A+B).$

$3\Rightarrow 4.$ For every $x\in\cH,$ $(A-B)x=(A+B)x-2Bx\in R(A+B).$

$4\Rightarrow 1.$ For every $x\in\cH,$ $2Ax=(A-B)x+(A+B)x\in R(A+B)$ and $2Bx=-(A-B)x+(A+B)x\in R(A+B),$ and we get $R(A)+R(B)\subseteq R(A+B).$ 

\end{proof}

The next result of R.G. Douglas \cite{Dou} will be frequently used in the paper. 

\begin{teo}
Let $A\in \L(\cH,\cK)$ and $B\in L(\cF, \cK)$. The following conditions are equivalent:
\begin{enumerate}
\item $R(B)\subseteq R(A)$.
\item There is a positive number $\lambda$ such that $BB^*\leq \lambda AA^*$.
\item There exists $C\in L(\cF, \cH)$ such that $AC=B.$
\end{enumerate}
If one of these conditions holds then there is a unique operator $D\in L(\cF, \cH) $ such that $AD=B$  and $R(D)\subseteq N(A)^\bot$. We shall call $D$ the {\textbf{reduced solution}} of $AX=B$.  
\end{teo}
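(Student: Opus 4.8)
The plan is to prove Douglas's theorem by establishing the cycle $1 \Rightarrow 2 \Rightarrow 3 \Rightarrow 1$, together with the existence and uniqueness of the reduced solution. This is a classical factorization result and the natural strategy exploits the interplay between range inclusion, operator inequalities, and closed-range/complementation arguments on the Hilbert spaces $\cH, \cK, \cF$.

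First I would address $1 \Rightarrow 2$, which is the analytic heart of the argument. Assuming $R(B) \subseteq R(A)$, I want to produce a constant $\lambda > 0$ with $BB^* \leq \lambda AA^*$. The key tool is a closed-graph / uniform-boundedness argument: for each $y \in \cK$, the inclusion $R(B) \subseteq R(A)$ lets me solve $Ax = B^* y$ (after suitable adjustments), and I would show that the map assigning to $B^* y$ the minimal-norm solution in $N(A)^\perp$ is well-defined and closed, hence bounded by the closed graph theorem. Concretely, the cleanest route is to consider the linear map $T$ sending $A^* \xi \mapsto B^* \xi$ on $R(A^*)$ and extending by continuity; its boundedness is exactly what yields $\|B^* \xi\| \leq \lambda^{1/2} \|A^* \xi\|$, which upon squaring gives $\pint{BB^* \xi, \xi} \leq \lambda \pint{AA^* \xi, \xi}$ for all $\xi$, i.e., the desired Löwner inequality. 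I expect this boundedness step to be the main obstacle, since it requires care about where the solutions live and the application of the closed graph theorem to a densely defined map.

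Next, $2 \Rightarrow 3$ is again a boundedness-plus-construction argument. Starting from $BB^* \leq \lambda AA^*$, I would again build the bounded operator that factors $B$ through $A$: on the dense subspace $R(A^*) \subseteq N(A)^\perp$ define $D_0(A^* \xi) = B^* \xi$; the inequality $\|B^* \xi\|^2 = \pint{BB^*\xi, \xi} \leq \lambda \pint{AA^* \xi, \xi} = \lambda \|A^* \xi\|^2$ shows $D_0$ is well-defined (constant on cosets where $A^* \xi$ agrees) and bounded by $\lambda^{1/2}$, so it extends to a bounded operator on $\overline{R(A^*)}$, and then I set it to zero on $N(A)$. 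Taking adjoints converts $D_0^* A = B$ into the required factorization $AC = B$ with $C = D_0^*$. Finally $3 \Rightarrow 1$ is immediate: if $AC = B$ then $R(B) = R(AC) \subseteq R(A)$.

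It remains to establish the reduced solution. Given any solution $C$ of $AX = B$, I would replace it by $D := P_{N(A)^\perp} C$, so that $R(D) \subseteq N(A)^\perp$ while $AD = A P_{N(A)^\perp} C = AC = B$, using $A|_{N(A)} = 0$. For uniqueness, if $AD_1 = AD_2 = B$ with both ranges contained in $N(A)^\perp$, then $A(D_1 - D_2) = 0$ forces $R(D_1 - D_2) \subseteq N(A) \cap N(A)^\perp = \{0\}$, hence $D_1 = D_2$. This completes the proof; the only genuinely delicate point is the closed-graph boundedness in the first implication, and I would note that the operator $D$ constructed in $2 \Rightarrow 3$ is already the reduced solution, so the two parts of the theorem fit together cleanly.
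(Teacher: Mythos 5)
First, note that the paper does not actually prove this statement: it is Douglas's theorem, quoted verbatim with a citation to \cite{Dou}, so the only meaningful comparison is with the standard (Douglas's original) proof. Your overall architecture --- $3\Rightarrow 1$ trivially, $2\Rightarrow 3$ by the densely defined map $A^*\xi\mapsto B^*\xi$ bounded by $\lambda^{1/2}$ and extended by continuity to $\overline{R(A^*)}=N(A)^\perp$, existence of the reduced solution by composing with $P_{N(A)^\perp}$ and uniqueness from $R(D_1-D_2)\subseteq N(A)\cap N(A)^\perp=\{0\}$ --- is exactly the classical argument and is correct (modulo two harmless slips: $Ax=B^*y$ does not typecheck, you mean $Ax=By$ with $y\in\cF$; and the adjoint identity should read $AD_0^*=B$, not $D_0^*A=B$, since $D_0A^*=B^*$).

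The genuine gap is in your ``cleanest route'' for $1\Rightarrow 2$. You propose to apply the closed graph theorem to the map $T\colon A^*\xi\mapsto B^*\xi$ defined on $R(A^*)$ and then ``extend by continuity.'' This fails: the closed graph theorem requires the domain to be a Banach space, and $R(A^*)$ is in general a proper non-closed (hence non-complete) subspace of $\cH$; a closed, densely defined operator need not be bounded, so having a closed graph on $R(A^*)$ buys you nothing, and you cannot extend by continuity before you know boundedness --- which is essentially condition 2 itself, making the step circular. The correct repair is the one hiding in the first half of your own paragraph: define the solution operator $C\colon\cF\to\cH$ on \emph{all} of $\cF$ by letting $Cf$ be the unique $x\in N(A)^\perp$ with $Ax=Bf$ (well defined since $R(B)\subseteq R(A)$); its graph is closed because $N(A)^\perp$ is closed and $A$, $B$ are continuous, and $\cF$ is complete, so the closed graph theorem gives $C\in L(\cF,\cH)$. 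This proves $1\Rightarrow 3$ directly (and $C$ is already the reduced solution), and then $2$ follows from $3$ via $BB^*=ACC^*A^*\leq\|C\|^2AA^*$. In other words, the cycle should be run as $1\Rightarrow 3\Rightarrow 2\Rightarrow 3\Rightarrow 1$; the implication $1\Rightarrow 2$ should not be attempted head-on through the densely defined map.
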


\begin{cor}
For $A,B\in L(\cH)$ the following conditions are equivalent:
\begin{enumerate}
\item the equation $AX=B$ has a solution in $L(\cH).$
\item $(A-B,B)\in\cR.$
\end{enumerate}
\end{cor}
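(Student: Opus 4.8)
The plan is to reduce both conditions to the single range inclusion $R(B) \subseteq R(A)$ and then invoke the two results stated immediately above. The guiding observation is the trivial algebraic identity $(A - B) + B = A$: the sum of the two operators making up the pair $(A - B, B)$ is exactly $A$. Once this is noticed, the corollary is essentially a bookkeeping exercise combining Douglas' theorem with Proposition \ref{sumarangos}.

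First I would dispatch condition (1). Applying the theorem of Douglas quoted just above, with $\cH = \cK = \cF$ and with the roles of the two operators played by $A$ and $B$, the equation $AX = B$ admits a solution $X \in L(\cH)$ if and only if $R(B) \subseteq R(A)$. So condition (1) is equivalent to $R(B) \subseteq R(A)$.

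Next I would dispatch condition (2). Here I apply Proposition \ref{sumarangos} to the pair $(A - B, B)$, i.e. substituting $A - B$ for the first operator and $B$ for the second. The equivalence between items (1) and (3) of that proposition then states that $(A - B, B) \in \cR$ if and only if $R(B) \subseteq R\bigl((A - B) + B\bigr)$. Invoking the identity $(A - B) + B = A$, this inclusion is precisely $R(B) \subseteq R(A)$. Hence condition (2) is also equivalent to $R(B) \subseteq R(A)$.

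Since conditions (1) and (2) are each equivalent to $R(B) \subseteq R(A)$, they are equivalent to one another, which is what we wanted. I do not anticipate a genuine obstacle in this argument; the one point requiring care is the substitution into Proposition \ref{sumarangos}, where one must make sure that the ``sum operator'' $A + B$ appearing in that proposition is correctly identified with $A$ in the present notation, rather than with $A - B$ or some other combination.
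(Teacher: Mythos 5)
Your argument is correct and is exactly the intended one: the paper leaves this corollary without an explicit proof precisely because it follows by combining Douglas' theorem (condition (1) is equivalent to $R(B)\subseteq R(A)$) with the equivalence of items 1 and 3 of Proposition \ref{sumarangos} applied to the pair $(A-B,B)$, whose sum is $A$. Nothing is missing.
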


\begin{cor}
For $A,B\in L(\cH)^+$ it holds:
\begin{enumerate}
\item $(A+B)^{1/2}X=A^{1/2}$ has a solution.
\item $(A+B)^{1/2}X=B^{1/2}$ has a solution.
\item $(A^{1/2}, (A+B)^{1/2}-A^{1/2})\in\cR.$
\item $(B^{1/2}, (A+B)^{1/2}-B^{1/2})\in\cR.$
\end{enumerate}
\end{cor}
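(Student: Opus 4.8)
The plan is to obtain items 1 and 2 directly from the Douglas theorem together with elementary positivity, and then to deduce items 3 and 4 from the immediately preceding corollary combined with the symmetry of $\cR$.

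First I would prove item 1. Since $B\in L(\cH)^+$, the L\"owner inequality $A\leq A+B$ holds. Because the positive square roots are selfadjoint, we have the factorizations $A^{1/2}(A^{1/2})^*=A$ and $(A+B)^{1/2}\big((A+B)^{1/2}\big)^*=A+B$, so the inequality $A\leq A+B$ is exactly condition (2) of the Douglas theorem, with $\lambda=1$, for the pair consisting of Douglas's $A=(A+B)^{1/2}$ and Douglas's $B=A^{1/2}$. Hence condition (3) of that theorem furnishes an operator $X\in L(\cH)$ with $(A+B)^{1/2}X=A^{1/2}$, which is item 1. Item 2 is obtained in the same way with the roles of $A$ and $B$ exchanged: since $A\geq 0$ we have $B\leq A+B$, and the Douglas theorem again yields a solution of $(A+B)^{1/2}X=B^{1/2}$.

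Next I would deduce item 3. Applying the preceding corollary with the pair $\big((A+B)^{1/2},\,A^{1/2}\big)$ in place of its generic $(A,B)$, the solvability of $(A+B)^{1/2}X=A^{1/2}$ established in item 1 is equivalent to $\big((A+B)^{1/2}-A^{1/2},\,A^{1/2}\big)\in\cR$. Finally, the symmetry of $\cR$ (item 1 of the first Proposition of this section) converts this into $\big(A^{1/2},\,(A+B)^{1/2}-A^{1/2}\big)\in\cR$, which is item 3. Item 4 follows identically, starting from item 2 and applying the corollary to the pair $\big((A+B)^{1/2},\,B^{1/2}\big)$.

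The argument is essentially a chain of applications of results already available, so I anticipate no serious obstacle. The only point requiring care is the bookkeeping in the Douglas theorem, namely recognizing that the factorizations $A^{1/2}(A^{1/2})^*=A$ and $(A+B)^{1/2}\big((A+B)^{1/2}\big)^*=A+B$ allow one to read the operator inequality $A\leq A+B$ (respectively $B\leq A+B$) precisely as the hypothesis $BB^*\leq\lambda AA^*$ for the relevant pair, and keeping the generic operators of the cited corollary distinct from the positive operators $A,B$ of the present statement.
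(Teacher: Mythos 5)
Your proposal is correct and follows essentially the same route as the paper, whose entire proof is the observation that $A+B\geq A$ and $A+B\geq B$ so that Douglas' theorem applies; you merely make explicit the bookkeeping for Douglas' condition (2) and the appeal to the preceding corollary plus symmetry of $\cR$ for items 3 and 4, all of which is exactly what the paper leaves implicit.
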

\begin{proof}
In fact, it holds $A+B\geq A, A+B\geq B$ and Douglas' theorem applies.
\end{proof}

The next corollary complements Proposition \ref{sad}. For a proof see \cite[Prop. 4.13]{ACS}.

\begin{cor}\label{ecua}
For $A,B\in L(\cH,\cK)$ the following conditions are equivalent:
\begin{enumerate}
\item $\overline{R(A^*)}\overset{.}{+}\overline{R(B^*)}$ is closed;
\item equation $(A+B)X=A$ admits a solution which is an oblique (i.e., not necessarily orthogonal) projection in $L(\cH).$ 
\end{enumerate}
\end{cor}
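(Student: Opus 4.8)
The plan is to reduce everything to the characterization already available in Proposition \ref{sad}, which asserts that condition (1) is equivalent to $N(A)+N(B)=\cH$. So it suffices to prove that the equation $(A+B)X=A$ admits an idempotent solution in $L(\cH)$ if and only if $N(A)+N(B)=\cH$, and then invoke Proposition \ref{sad} to pass back to the closedness statement.

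For the implication from the existence of an idempotent solution, suppose $X\in L(\cH)$ satisfies $X^2=X$ and $(A+B)X=A$. Applying $A+B$ to $I-X$ gives $(A+B)(I-X)=(A+B)-(A+B)X=B$. From $A=(A+B)X$ I read off $N(X)\subseteq N(A)$, and from $B=(A+B)(I-X)$ together with $N(I-X)=R(X)$ (valid because $X$ is idempotent) I read off $R(X)\subseteq N(B)$. Since $\cH=R(X)\overset{.}{+}N(X)$, I conclude $\cH\subseteq N(B)+N(A)$, whence $N(A)+N(B)=\cH$, and condition (1) follows from Proposition \ref{sad}.

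Conversely, assume $N(A)+N(B)=\cH$; here I would build the idempotent explicitly. Set $S:=N(B)\ominus(N(A)\cap N(B))=N(B)\cap(N(A)\cap N(B))^\bot$, which is closed. Decomposing $N(B)=(N(A)\cap N(B))\oplus S$ and using $N(A)\cap N(B)\subseteq N(A)$ gives $N(A)+S=N(A)+N(B)=\cH$; moreover $N(A)\cap S\subseteq(N(A)\cap N(B))\cap(N(A)\cap N(B))^\bot=\{0\}$, so the sum is direct, $\cH=S\overset{.}{+}N(A)$. As both summands are closed and their algebraic direct sum is all of $\cH$, the closed graph theorem guarantees that the oblique projection $X:=Q_{S//N(A)}$ is bounded and idempotent. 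Finally I verify $(A+B)X=A$ by splitting $\xi=s+n$ with $s\in S\subseteq N(B)$ and $n\in N(A)$: then $X\xi=s$, so $(A+B)X\xi=(A+B)s=As$ since $Bs=0$, while $A\xi=As+An=As$ since $n\in N(A)$, and the two sides agree.

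The conceptual core is the converse direction: the whole point is that $N(A)+N(B)=\cH$ lets one carve out a closed complement $S$ of $N(A)$ sitting inside $N(B)$, and the only analytic subtlety is the automatic boundedness of $Q_{S//N(A)}$, which follows from the closed graph theorem once the algebraic decomposition $\cH=S\overset{.}{+}N(A)$ is in hand. I do not expect to need Douglas' theorem explicitly, since the idempotent is produced by hand; it only reappears implicitly in the observation that any such $X$ is in particular a genuine solution of $(A+B)X=A$, forcing $R(A)\subseteq R(A+B)$ as in Proposition \ref{sumarangos}.
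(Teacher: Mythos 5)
Your proof is correct. Note, though, that the paper itself does not prove this corollary: it simply refers the reader to \cite[Prop.\ 4.13]{ACS}, so there is no in-paper argument to compare yours against. What you supply is a self-contained replacement: you reduce condition (1) to $N(A)+N(B)=\cH$ via Proposition \ref{sad} (which is exactly the equivalence $1\Leftrightarrow 2$ stated there, since $N(A)=\overline{R(A^*)}^\bot$ and the direct-sum notation $\overset{.}{+}$ carries the trivial-intersection condition), and then handle both implications by elementary idempotent algebra. The forward direction is clean: $N(X)\subseteq N(A)$, $R(X)=N(I-X)\subseteq N(B)$, and $\cH=R(X)\overset{.}{+}N(X)$ give $N(A)+N(B)=\cH$. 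The converse is the real content, and your choice $S=N(B)\ominus(N(A)\cap N(B))$ is exactly the right way to carve a closed algebraic complement of $N(A)$ out of $N(B)$; the closed graph theorem then gives boundedness of $Q_{S//N(A)}$, and the verification $(A+B)Q_{S//N(A)}=A$ is immediate. This buys the reader a proof that does not require consulting \cite{ACS}, at the modest cost of invoking Proposition \ref{sad} (itself only cited in the paper); the argument is elementary and, apart from the closed graph theorem, purely algebraic.
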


Recall that $A,B\in L(\cH)^+$ are said to be {\it Thompson equivalent} (in symbols, $A\sim_T B$) if there exist positive numbers $r,s$ such that $rA\leq B\leq s A$ (where $C\leq D$ means that $\pint{Cx, x} \leq  \pint{Dx, x}$ for all $x\in\cH$). By Douglas' theorem, $A\sim_T B$ if and only if $R(A^{1/2})=R(B^{1/2}).$ For a fixed $A\in L(\cH)^+$ the Thompson component of $A$ is the convex cone $\{B\in L(\cH)^+: A\sim_T B\}$. The following identity is due to Crimmins (see \cite{FW} for a proof): if $A,B\in L(\cH,\cK)$ then $R(A)+R(B)=R((AA^*+BB^*)^{1/2}).$ Using Crimmins' identity the following result is clear:

\begin{prop}
If $A,B\in L(\cH)^+$ then $(A,B)\in \cR$ if and only if $(A+B)^2\sim_T A^{2}+B^2.$
\end{prop}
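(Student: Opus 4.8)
The plan is to rewrite both sides of the range-additivity equality $R(A+B)=R(A)+R(B)$ as ranges of square roots of positive operators, and then to invoke the Douglas-based characterization of Thompson equivalence recalled just above, namely that $C\sim_T D$ if and only if $R(C^{1/2})=R(D^{1/2})$.

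First I would use that $A,B\in L(\cH)^+$ are selfadjoint, so that $AA^*=A^2$ and $BB^*=B^2$. Crimmins' identity then gives
$$R(A)+R(B)=R((A^2+B^2)^{1/2}).$$
On the other hand, $A+B\in L(\cH)^+$, and any positive operator coincides with the positive square root of its square, so $A+B=((A+B)^2)^{1/2}$ and hence
$$R(A+B)=R(((A+B)^2)^{1/2}).$$

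With these two identities in hand, the condition $(A,B)\in\cR$, i.e. $R(A+B)=R(A)+R(B)$, becomes equivalent to
$$R(((A+B)^2)^{1/2})=R((A^2+B^2)^{1/2}).$$
Applying the characterization of Thompson equivalence to the positive operators $C=(A+B)^2$ and $D=A^2+B^2$, this last equality holds precisely when $(A+B)^2\sim_T A^2+B^2$, which is the desired conclusion. The argument is a direct chaining of the two stated identities; the only point requiring attention is the reduction of each range to the range of a square root of a positive operator so that the Thompson criterion applies, and I do not expect any substantial obstacle beyond that bookkeeping.
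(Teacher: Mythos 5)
Your proof is correct and follows exactly the route the paper intends: the paper states the result as ``clear'' from Crimmins' identity, and your argument---writing $R(A)+R(B)=R((A^2+B^2)^{1/2})$ via Crimmins with $AA^*=A^2$, $BB^*=B^2$, writing $R(A+B)=R(((A+B)^2)^{1/2})$ since $A+B\geq 0$, and invoking the Douglas characterization $C\sim_T D \Leftrightarrow R(C^{1/2})=R(D^{1/2})$---is precisely the omitted verification. Nothing further is needed.
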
 

The next characterization of $\cR$ is less elementary than that of Proposition \ref{sad}. Notice, however, that its proof is algebraic, so it also holds in the context of vector spaces, modules over a ring, and so on.

\begin{teo}\label{Alejandra}
Let $A,B\in L(\cH).$ Then $R(A+B)=R(A)+R(B)$ if and only if $R(A)\cap R(B)\subseteq R(A+B)$ and $\cH=A^{-1}({R(B)})+B^{-1}(R(A)).$ In particular, if $R(A)\cap R(B)=\{0\}$ then $(A,B)\in \cR$ if and only if $N(A)+N(B)=\cH.$
\end{teo}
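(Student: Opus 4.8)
The plan is to exploit the trivial inclusion $R(A+B)\subseteq R(A)+R(B)$, which holds because $(A+B)x=Ax+Bx$ for every $x\in\cH$. Thus the equality $R(A+B)=R(A)+R(B)$ is equivalent to the reverse inclusion $R(A)+R(B)\subseteq R(A+B)$, and by Proposition \ref{sumarangos} this is in turn equivalent to the single inclusion $R(A)\subseteq R(A+B)$. I will prove both implications against this reformulation, treating the two stated conditions separately.

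For the direct implication, assume $(A,B)\in\cR$. Then condition 1 is immediate, since $R(A)\cap R(B)\subseteq R(A)+R(B)=R(A+B)$. To obtain condition 2, fix $x\in\cH$; since $Ax\in R(A)\subseteq R(A+B)$, there is $w\in\cH$ with $Ax=(A+B)w=Aw+Bw$. Rearranging gives $A(x-w)=Bw$, so that $A(x-w)\in R(B)$ and $Bw\in R(A)$; hence $x-w\in A^{-1}(R(B))$ and $w\in B^{-1}(R(A))$, and the decomposition $x=(x-w)+w$ shows $\cH=A^{-1}(R(B))+B^{-1}(R(A))$.

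For the converse, assume conditions 1 and 2 hold and fix $x\in\cH$. By condition 2, write $x=y+z$ with $y\in A^{-1}(R(B))$ and $z\in B^{-1}(R(A))$. The key observation is that $Ay\in R(A)\cap R(B)$: indeed $Ay\in R(A)$ trivially, while $Ay\in R(B)$ by the choice of $y$. Likewise $Bz\in R(A)\cap R(B)$. Condition 1 then places both $Ay$ and $Bz$ in $R(A+B)$. Since $Az=(A+B)z-Bz$ with $(A+B)z\in R(A+B)$ and $Bz\in R(A+B)$, we conclude $Az\in R(A+B)$, and therefore $Ax=Ay+Az\in R(A+B)$. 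This establishes $R(A)\subseteq R(A+B)$ and hence $(A,B)\in\cR$.

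Finally, in the particular case $R(A)\cap R(B)=\{0\}$, condition 1 holds automatically. Moreover $A^{-1}(R(B))=\{x\in\cH:Ax\in R(A)\cap R(B)\}=N(A)$, since $Ax\in R(A)$ always, and symmetrically $B^{-1}(R(A))=N(B)$; thus condition 2 reduces to $\cH=N(A)+N(B)$, yielding the stated equivalence. The only genuinely delicate point is the converse, and its crux is the observation that elements of the preimages $A^{-1}(R(B))$ and $B^{-1}(R(A))$ are carried by $A$ and $B$ into the intersection $R(A)\cap R(B)$, which is exactly where condition 1 can be brought to bear.
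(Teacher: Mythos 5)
Your proof is correct and follows essentially the same route as the paper: both reduce the equality of ranges to a single inclusion via Proposition \ref{sumarangos}, and both exploit the key fact that $A$ maps $A^{-1}(R(B))$ into $R(A)\cap R(B)$ (and symmetrically for $B$), so that condition 1 can be applied. Your element-wise phrasing of the converse is a minor stylistic streamlining of the paper's argument with images of subspaces, not a different method.
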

\begin{proof}
Let $T=A+B,$ $\cW=R(A)\cap R(B)$ and suppose that $R(T)=R(A)+R(B).$ Then $R(A)\subseteq R(T)$ and $R(B)\subseteq R(T)$ so that $\cW \subseteq R(T).$ On the other hand, using again that $R(A)$ and $R(B)$ are subsets of $R(T)$ it holds $\cH=T^{-1}(R(T))=T^{-1}(R(A)+R(B))=T^{-1}(R(A))+T^{-1}(R(B)).$ But it is easy to see that $T^{-1}(R(A))=B^{-1}(R(A)).$ Hence, $\cH=T^{-1}(R(A))+T^{-1}(R(B))=A^{-1}({R(B)})+B^{-1}(R(A)).$

Conversely, suppose that $\cW\subseteq R(T)$ and $\cH=B^{-1}(R(A))+A^{-1}(R(B)).$ We shall prove that $R(B)=T(A^{-1}(R(B))).$ In fact, since $B^{-1}(R(A))=B^{-1}(\cW)$ and $A^{-1}(R(B))=A^{-1}(\cW)$ then
$$R(B)=B(\cH)=B(B^{-1}(\cW)+A^{-1}(\cW))=\cW+B(A^{-1}(\cW)),$$
because $\cW\subseteq R(B).$ Moreover, $R(B)=\cW+B(A^{-1}(\cW))=\cW+T(A^{-1}(\cW))=T(A^{-1}(\cW)).$ In fact, for the second equality consider $y\in \cW+B(A^{-1}(\cW))$ then $y=w+Bx$ where $w\in\cW$ and $x\in A^{-1}(\cW),$ so that $y=w-Ax+Tx$ where $w-Ax\in\cW$ and $Tx\in T(A^{-1}(\cW));$ the other inclusion is clear. Then the second equality holds.

To see that $\cW+T(A^{-1}(\cW))=T(A^{-1}(\cW))$ it is sufficient to note that $\cW\subseteq T(A^{-1}(\cW)).$ In fact, $T^{-1}(\cW)=A^{-1}(\cW)\cap B^{-1}(\cW)\subseteq A^{-1}(\cW) $ then applying $T$ to both sides of the inclusion $\cW=TT^{-1}(\cW)\subseteq T(A^{-1}(\cW))$ because $\cW\subseteq R(T).$

Hence, $R(B)=T(A^{-1}(\cW))=T(A^{-1}(R(B))\subseteq R(T).$ Applying Proposition \ref{sumarangos}, $(A,B)\in \cR.$

\end{proof}

One of the obstructions for range additivity for operators in Hilbert spaces is that $R(A)$ is, in general, non closed. Therefore, the identity $R(A+B)=R(A)+(B)$ is not equivalent to $N(A^*+B^*)=N(A^*)\cap N(B^*),$ which is easier to check. On these matters, see the papers by P. $\check{\rm S}$emrl \cite[\S 2]{Se} and G. L$\check{\rm e}$snjak and P. $\check{\rm S}$emrl \cite{LS}, where they discuss different kinds of topological range additivity properties. See also the paper by J. Baksalary, P. $\check{\rm S}$emrl and G. P. H. Styan \cite{BSS}.

\section{Shorted operators and range additivity}\label{secshorted}

In his paper on selfadjoint extensions of certain unbounded operators \cite{K}, M. G. Krein defined for the first time a shorted operator (this is modern terminology). More precisely, if $A\in L(\cH)^+$ and $\ese$ is a closed subspace of $\cH,$ Krein proved that the set
$$\{C \in L(\cH)^+ : C \leq A \;  {\rm and} \;  R(C) \subseteq \ese\}$$
admits a maximal element $[\ese]A.$ Moreover, Krein proved that 
$$[\ese]A=A^{1/2}P_\eme A^{1/2},$$ 
if $\eme=A^{-1/2}(\ese).$  Krein constructed the shorted operators to find selfadjoint positive extensions of certain unbounded operators. For a modern exposition of Krein's ideas on these matters, see \cite{Ar}. 

Later, W. N. Anderson and G. E. Trapp \cite{AT} rediscovered the operator $[\ese]A,$ proved many useful properties and showed its relevance in the theory of impedance matrices of networks. The papers by E. L. Pekarev \cite{Pe}, Pekarev and Smul'jan \cite{PS}, T. Ando \cite{A}  and S. L. Eriksson and H. Leutwiler \cite{EL} contain many useful theorems about Krein shorted operators. A nice exposition for shorted operators in finite dimensional spaces is that of T. Ando \cite{A}.  It is worth mentioning that there is a binary operation between positive operators, the {\it parallel sum}, which is also relevant in electrical network theory and which is related to shorted operators. If $A,B$ are the impedance matrices of two $n$-port resistive networks then $A:B:=A(A+B)^\dagger B$ is the impedance matrix of their parallel connection. For positive operators $A,B$ on a Hilbert space $\cH$, Fillmore and Williams \cite{FW} defined
$$A:B=A^{1/2}C^*DB^{1/2},$$
if $C$ (resp. $D$) is the reduced solution of $(A+B)^{1/2}X=A^{1/2}$ (resp. $(A+B)^{1/2}X=B^{1/2}$). 

Anderson and Trapp \cite{AT} proved that $A:B$ is the $(1,1)$ entry of $[\ese] \left(
\begin{array}[pos]{cc}
A & A\\
A & A+B	
\end{array}
\right),$ if $\ese=\cH\oplus \{0\} $ and the matrix $\left(
\begin{array}[pos]{cc}
A & A\\
A	 & A+B	
\end{array}
\right)$ is considered as an element of $L(\cH\oplus \cH)^+.$ Thus, the parallel addition is a particular form of the shorted operation. 
Any extension to non necessarily positive operators of the parallel sum operation requires that $(A, B)$ and $(A^*, B^*)$ belong to $\cR,$ at least if one wants to keep the desirable commutativity $A:B=B:A$ \cite[10.1.6]{Rao}.  Indeed, Rao and Mitra say that $A,B$ are {\it parallel summable} if $A(A+B)^-B$ is invariant for any generalized inverse of $A+B.$ It turns out that this happens if and only if $(A,B)\in\cR$ and $(A^*, B^*)\in\cR.$ This means that there is an strong relationship among Krein shorted operators, Douglas range inclusion and range additivity.

We collect in the next proposition some facts on the Krein shorted operators, mainly extracted from the paper \cite{AT} by Anderson and Trapp. 

A warning about notation. The original notation by Krein is $A_\ese$. Anderson and Trapp \cite{AT} used $\ese(A).$ Ando \cite{A3} proposed $[\ese]A.$ This is coherent with a relevant construction $[B]A$ for $A,B\in L(\cH)^+$ that he defined and studied in \cite{A1}, by generalizing a theorem of Anderson and Trapp that  $([\ese]A)x=\lim_{n\rightarrow\infty}(A:n P_\ese)x$ for every $x\in\cH.$ Ando defined the existence of  $([B]A)x=\lim_{n\rightarrow\infty}(A:n B)x$ for every $x\in\cH$ and proved many relevant results on this construction. In particular, it holds that $[\ese]A=[B]A$ if $\ese=R(B).$ Erikson and Leutwiler \cite{EL} used $Q_BA$ for Ando's $[B]A.$ In \cite{A}, Ando has used $A_{/\ese}$ for the shorted operator and $A_\ese=A-A_{/\ese}.$ Corach, Maestripieri and Stojanoff used $\sum(P_\ese,A)$ in \cite{ GMS3} and $A_{/\ese}$ in \cite{CMS0} to denote what we are denoting now $[\ese^\bot]A.$

\begin{prop}\label{propiedadesShort}
Given $A,B\in L(\cH)^+$ and closed subspaces $\ese,\ete$ of $\cH$ the following properties hold:
\begin{enumerate}
\item $R(A)\cap\ese\subseteq R([\ese]A)\subseteq R(([\ese]A)^{1/2})=R(A^{1/2})\cap\ese;$ in particular, $R([\ese]A)$ is closed if $R(A)$ is closed or, more generally, if $R(A)\cap\ese=R(A^{1/2})\cap\ese.$
\item $N([\ese]A)=N(P_{A^{-1/2}\ese}A^{1/2})=A^{-1/2}\overline{A^{1/2}(\ese^\bot)}\supseteq N(A)+\ese^\bot;$ equality holds if and only if $\overline{A^{1/2}(\ese^\bot})\cap R(A^{1/2})=A^{1/2}(\ese^\bot)$.
\item $[\ese](A+B)\geq [\ese]A+[\ese]B;$ equality holds if and only if $R((A-[\ese]A+B-[\ese]B)^{1/2})\cap\ese=\{0\}.$
\item $R((A-[\ese]A)^{1/2})\cap\ese=\{0\}.$ In particular, $\overline{R(\AS)}\cap R(A-\AS)=\{0\}.$
  
\end{enumerate} 
\end{prop}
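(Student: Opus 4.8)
The plan is to base everything on Krein's factorization $[\ese]A=A^{1/2}P_\eme A^{1/2}$ with $\eme=A^{-1/2}(\ese)$ (note $\eme$ is closed, being the preimage of the closed set $\ese$ under the continuous map $A^{1/2}$), together with four standard ingredients: for $T\in L(\cH)^+$ one has $R(T)\subseteq R(T^{1/2})$, $\overline{R(T)}=\overline{R(T^{1/2})}$, $N(T)=N(T^{1/2})$, and $R(T)$ is closed iff $R(T)=R(T^{1/2})$; for any $M\in L(\cH)$, $N(M^*M)=N(M)$ and (a special case of the Crimmins identity quoted above, with one summand zero) $R((MM^*)^{1/2})=R(M)$; the elementary set identities $T(T^{-1}(\ve))=\ve\cap R(T)$ and $T^{-1}(T(\ve))=\ve+N(T)$; and Douglas' theorem.

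For item $1$, I would write $[\ese]A=CC^*$ with $C=A^{1/2}P_\eme$ (so $C^*=P_\eme A^{1/2}$ and $CC^*=A^{1/2}P_\eme A^{1/2}$). Then $R(([\ese]A)^{1/2})=R(C)=A^{1/2}(\eme)$, and the set identity gives $A^{1/2}(\eme)=A^{1/2}(A^{-1/2}(\ese))=R(A^{1/2})\cap\ese$; the middle inclusion $R([\ese]A)\subseteq R(([\ese]A)^{1/2})$ is the general fact above. The inclusion $R(A)\cap\ese\subseteq R([\ese]A)$ I would get in one line: if $y=Ax\in\ese$ then $z:=A^{1/2}x$ satisfies $A^{1/2}z=y\in\ese$, so $z\in\eme$, $P_\eme z=z$, and $[\ese]A\,x=A^{1/2}P_\eme z=A^{1/2}z=y$. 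For the ``in particular'' clauses, both hypotheses give $R(A)\cap\ese=R(A^{1/2})\cap\ese$, and the squeeze then forces $R([\ese]A)=R(([\ese]A)^{1/2})$, whence $R([\ese]A)$ is closed.

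For item $2$, I would instead write $[\ese]A=M^*M$ with $M=P_\eme A^{1/2}$, so $N([\ese]A)=N(M)=N(P_\eme A^{1/2})=A^{-1/2}(\eme^\bot)$. From $A^{1/2}x\in\ese\iff x\perp A^{1/2}(\ese^\bot)$ one gets $\eme=(A^{1/2}(\ese^\bot))^\bot$, hence $\eme^\bot=\overline{A^{1/2}(\ese^\bot)}$, giving the stated formula; the inclusion $\supseteq N(A)+\ese^\bot$ is immediate since both $N(A)=N(A^{1/2})$ and $\ese^\bot$ map into $\overline{A^{1/2}(\ese^\bot)}$ under $A^{1/2}$. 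The equality condition is the genuinely delicate point here: since both $N([\ese]A)$ and $N(A)+\ese^\bot$ contain $N(A^{1/2})$, they are saturated ($\ve+N(A^{1/2})=\ve$), so $A^{1/2}$ is ``injective on them'' and the two coincide iff they have equal image under $A^{1/2}$; computing $A^{1/2}(N([\ese]A))=\overline{A^{1/2}(\ese^\bot)}\cap R(A^{1/2})$ and $A^{1/2}(N(A)+\ese^\bot)=A^{1/2}(\ese^\bot)$ then yields exactly the stated criterion.

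I would prove item $4$ before the equality half of item $3$, since the latter depends on it. The superadditivity $[\ese](A+B)\ge[\ese]A+[\ese]B$ is immediate from maximality: $[\ese]A+[\ese]B$ is positive, dominated by $A+B$, and has range in $\ese$, so it is dominated by the maximum $[\ese](A+B)$. For $R((A-[\ese]A)^{1/2})\cap\ese=\{0\}$ I expect the maximality argument to be the main obstacle. If there were $0\neq y\in R((A-[\ese]A)^{1/2})\cap\ese$, then since $y\in R((A-[\ese]A)^{1/2})$ Douglas' theorem gives $c>0$ with $c\,P_{\C y}\le A-[\ese]A$; hence $[\ese]A+c\,P_{\C y}$ is positive, $\le A$, has range in $\ese$ (as $\C y\subseteq\ese$), and strictly exceeds $[\ese]A$, contradicting maximality. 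The ``in particular'' clause follows from $\overline{R([\ese]A)}\subseteq\ese$ and $R(A-[\ese]A)\subseteq R((A-[\ese]A)^{1/2})$. Finally, setting $D=[\ese](A+B)-[\ese]A-[\ese]B\ge0$ and $R=(A+B)-[\ese](A+B)\ge0$, one has $(A-[\ese]A)+(B-[\ese]B)=R+D$; if $D=0$ the criterion is precisely item $4$ for $A+B$, while if the criterion holds then $R(D)\subseteq\ese$ forces $R(D^{1/2})\subseteq\overline{R(D)}\subseteq\ese$, and $D\le R+D$ forces (Douglas) $R(D^{1/2})\subseteq R((R+D)^{1/2})$, so $R(D^{1/2})\subseteq\ese\cap R((R+D)^{1/2})=\{0\}$, i.e.\ $D=0$.
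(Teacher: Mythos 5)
Your proof is correct, but it cannot be compared line-by-line with the paper's, because the paper offers no argument at all for this proposition: its ``proof'' is a list of citations (items 1, 3, 4 to Anderson--Trapp, item 2 to Corach--Maestripieri--Stojanoff). What you have done is reconstruct, correctly and completely, the content of those references. Your toolkit is exactly the one those papers use: Krein's factorization $[\ese]A=A^{1/2}P_\eme A^{1/2}$ with $\eme=A^{-1/2}(\ese)$, the identities $R((MM^*)^{1/2})=R(M)$ and $N(M^*M)=N(M)$, the saturation trick $T^{-1}(T(\ve))=\ve+N(T)$, and Douglas majorization. The individual steps all check out: the squeeze $R(A)\cap\ese\subseteq R(\AS)\subseteq R((\AS)^{1/2})=R(A^{1/2})\cap\ese$ in item 1 and its closed-range consequence; the identification $\eme^\bot=\overline{A^{1/2}(\ese^\bot)}$ and the ``equal images under $A^{1/2}$ of saturated subspaces'' argument for the equality criterion in item 2; the rank-one perturbation $[\ese]A+cP_{\C y}$ contradicting Krein's maximality in item 4 (this is precisely Anderson--Trapp's proof of their Theorem 2); and the reduction of the equality case of item 3 to item 4 via $D\le R+D$ and $R(D^{1/2})\subseteq\ese\cap R((R+D)^{1/2})=\{0\}$. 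Your ordering (proving item 4 before the equality clause of item 3) is the logically necessary one and matches the structure of the source. In short: where the paper delegates, you prove, and what you prove is right.
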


\begin{proof}
\begin{enumerate}
\item See \cite[Corollary 4 of Theorem 1 and Corollary of Theorem 3]{AT}
\item See \cite[Corollary 2.3]{CMS0}
\item See \cite[Theorem 4]{AT}.
\item See \cite[Theorem 2]{AT}.

\end{enumerate}
\end{proof}

\begin{cor}\label{AS=S}
Let $A,B\in L(\cH)^+.$ Then:
\begin{enumerate} 
\item If $\ese=\overline{R(B)}$ then $[\ese]B=B$ and $[\ese](A+B)=[\ese]A+B.$
\item If $\ese=R(B)$ is closed then $R([\ese](A+B))=\ese$ and $N([\ese](A+B))=\ese^\bot.$
\end{enumerate}
\end{cor}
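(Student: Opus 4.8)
The plan is to treat both items as direct consequences of the properties collected in Proposition \ref{propiedadesShort}, specializing the closed subspace $\ese$ to the range (or closure of the range) of $B$. For item 1, I would first argue that $[\ese]B=B$ when $\ese=\overline{R(B)}$: since $B\in L(\cH)^+$ with $R(B)\subseteq\overline{R(B)}=\ese$, the operator $B$ itself is a competitor in Krein's defining set $\{C\in L(\cH)^+: C\leq B,\ R(C)\subseteq\ese\}$, and since it is the largest possible element under $C\leq B$, it must coincide with the maximum $[\ese]B$. The second equality $[\ese](A+B)=[\ese]A+B$ would then follow by combining item 3 of Proposition \ref{propiedadesShort} with the first equality: $[\ese](A+B)\geq[\ese]A+[\ese]B=[\ese]A+B$, and the reverse inequality should come from the maximality of $[\ese](A+B)$ together with the fact that $[\ese]A+B\leq A+B$ and $R([\ese]A+B)\subseteq\ese$.

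For item 2, I would use the sharper statements in items 1 and 2 of Proposition \ref{propiedadesShort} applied to the positive operator $A+B$ and the closed subspace $\ese=R(B)$. For the range identity $R([\ese](A+B))=\ese$, item 1 gives $R(A+B)\cap\ese\subseteq R([\ese](A+B))\subseteq R((A+B)^{1/2})\cap\ese$; the key point is to establish that $R(B)\subseteq R([\ese](A+B))$, which should follow from item 1 of the corollary since $[\ese](A+B)=[\ese]A+B\geq B$ forces (via Douglas' theorem) $R(B^{1/2})\subseteq R(([\ese](A+B))^{1/2})$, and the closedness of $\ese=R(B)$ upgrades this to the full range inclusion, yielding $R([\ese](A+B))=\ese$. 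For the nullspace identity $N([\ese](A+B))=\ese^\bot$, I would invoke item 2 of Proposition \ref{propiedadesShort}, which always gives $N([\ese](A+B))\supseteq N(A+B)+\ese^\bot\supseteq\ese^\bot$; the reverse inclusion $N([\ese](A+B))\subseteq\ese^\bot$ is immediate from positivity and $R([\ese](A+B))=\ese$, since for a positive operator the nullspace is the orthogonal complement of the closure of the range.

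The main obstacle I anticipate is the careful verification of the reverse inequality in the second equality of item 1, namely $[\ese](A+B)\leq[\ese]A+B$. Establishing $[\ese]A+B\geq 0$, $[\ese]A+B\leq A+B$, and $R([\ese]A+B)\subseteq\ese$ are routine, but concluding that $[\ese]A+B$ dominates the maximum $[\ese](A+B)$ requires that $[\ese]A+B$ itself be shown to lie in the competitor set so that maximality applies in the correct direction; this is where I would need to be most careful, since the equality case in item 3 of Proposition \ref{propiedadesShort} is governed by the condition $R((A-[\ese]A)^{1/2})\cap\ese=\{0\}$, which is exactly item 4 of that proposition (recall $[\ese]B=B$ gives $B-[\ese]B=0$). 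Thus the equality case condition reduces precisely to item 4, which holds automatically, and this is the clean route I would pursue to close the argument.
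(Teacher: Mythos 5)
Your proposal is correct and follows essentially the same route as the paper: item 1 is obtained exactly as in the text, by noting that $B$ itself realizes the maximum defining $[\ese]B$ and that the equality case of item 3 of Proposition \ref{propiedadesShort} reduces, via $B-[\ese]B=0$, to the condition guaranteed by item 4. For item 2 the paper sandwiches $R(([\ese](A+B))^{1/2})$ between $\ese$ and $R(([\ese]A)^{1/2})+\ese$ using Crimmins' identity, whereas you reach the same sandwich through Douglas' theorem applied to $B\leq[\ese]A+B=[\ese](A+B)$; this is only a cosmetic difference, though you should state explicitly that the closedness of $R(C^{1/2})$ forces $R(C)=R(C^{1/2})$ for a positive operator $C$, which is the step that converts the square-root range identity into the claimed one.
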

\begin{proof}
\begin{enumerate}
\item The identity $[\ese]B=B$ can be checked through the definition of $[\ese]B;$ the identity $[\ese](A+B)=[\ese]A+B$ follows from items 3 and 4 in Proposition \ref{propiedadesShort}.
\item For every $C\in L(\cH)^+$ it holds $R(([\ese]C)^{1/2})\subseteq \ese,$ therefore $\ese\supseteq R(([\ese](A+B))^{1/2})= R(([\ese]A+B)^{1/2})=R(([\ese]A)^{1/2})+\ese \supseteq \ese,$ where the second equality holds by Crimmins' identity.The kernel condition follows by taking orthogonal complement.
\end{enumerate}
\end{proof}

\begin{prop}\label{rangoAS} Let $A\in L(\cH)^+$ and let $\ese$ be a closed subspace of $\cH$. 
The following conditions are equivalent:
\begin{enumerate}
\item $(\AS, A-\AS)\in\cR;$
\item $R(A)=R(A-\AS)\overset{.}{+}R(\AS);$
\item $R(\AS)\subseteq R(A);$
\item $R(A^{1/2})=\eme\cap R(A^{1/2})\oplus\eme^\bot\cap R(A^{1/2}),$ if $\eme=A^{-1/2}(\ese).$
\end{enumerate}
\end{prop}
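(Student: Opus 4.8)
The plan is to separate the three easy equivalences $1\Leftrightarrow 2\Leftrightarrow 3$, which fall out of the characterizations of $\cR$ already at hand, from the geometric heart $3\Leftrightarrow 4$, which exploits Krein's formula $\AS=A^{1/2}P_\eme A^{1/2}$ with $\eme=A^{-1/2}(\ese)$. First I would observe that $A=\AS+(A-\AS)$ with both summands in $L(\cH)^+$, so $R(\AS+(A-\AS))=R(A)$. Applying Proposition \ref{sumarangos} to the pair $(\AS,A-\AS)$ gives that $(\AS,A-\AS)\in\cR$ is equivalent to $R(\AS)\subseteq R(A)$; this is exactly $1\Leftrightarrow 3$ with no computation. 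For $1\Leftrightarrow 2$, note that $(\AS,A-\AS)\in\cR$ means precisely $R(A)=R(\AS)+R(A-\AS)$, and to promote this to a direct sum it suffices that $R(\AS)\cap R(A-\AS)=\{0\}$, which is guaranteed by Proposition \ref{propiedadesShort}(4) since $\overline{R(\AS)}\cap R(A-\AS)=\{0\}$. The reverse implication $2\Rightarrow 1$ is immediate, as condition 2 contains the sum decomposition.

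The substantive step is $3\Leftrightarrow 4$. Here I would first reinterpret condition 4: because $\eme\perp\eme^\bot$, every $z\in R(A^{1/2})$ has the orthogonal decomposition $z=P_\eme z+P_{\eme^\bot}z$, and the splitting in 4 holds for all such $z$ exactly when $P_\eme z\in R(A^{1/2})$ for every $z\in R(A^{1/2})$, i.e. $P_\eme(R(A^{1/2}))\subseteq R(A^{1/2})$. Granting this, the direction $4\Rightarrow 3$ is direct: for any $x$ write $P_\eme A^{1/2}x=A^{1/2}v$ and conclude $\AS x=A^{1/2}P_\eme A^{1/2}x=Av\in R(A)$, so $R(\AS)\subseteq R(A)$. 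For the converse $3\Rightarrow 4$, I would take $z=A^{1/2}x\in R(A^{1/2})$ and note that $A^{1/2}P_\eme z=\AS x\in R(\AS)\subseteq R(A)=A^{1/2}(R(A^{1/2}))$, so that $P_\eme z-w\in N(A^{1/2})=\overline{R(A^{1/2})}^\bot$ for some $w\in R(A^{1/2})$.

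The hard part will be closing this last step, since the previous line only locates $P_\eme z$ in $R(A^{1/2})+\overline{R(A^{1/2})}^\bot$, whereas condition 4 demands the stronger $P_\eme z\in R(A^{1/2})$. The key observation that resolves this is that $N(A)\subseteq\eme$, because $A^{1/2}$ maps $N(A)=N(A^{1/2})$ into $\{0\}\subseteq\ese$; taking orthogonal complements yields $\eme^\bot\subseteq N(A)^\bot=\overline{R(A^{1/2})}$. Hence $P_{\eme^\bot}z\in\overline{R(A^{1/2})}$, and since $z\in\overline{R(A^{1/2})}$ as well, we get $P_\eme z=z-P_{\eme^\bot}z\in\overline{R(A^{1/2})}$. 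Then $P_\eme z-w$ lies in both $\overline{R(A^{1/2})}$ and $\overline{R(A^{1/2})}^\bot$, so it is zero, forcing $P_\eme z=w\in R(A^{1/2})$. This establishes $3\Rightarrow 4$ and, combined with the first paragraph, the equivalence of all four conditions.
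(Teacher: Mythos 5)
Your proof is correct and follows essentially the same route as the paper's: the equivalences $1\Leftrightarrow 2\Leftrightarrow 3$ come from Propositions \ref{sumarangos} and \ref{propiedadesShort}, and $3\Leftrightarrow 4$ is the same $A^{1/2}$-level computation based on Krein's formula $\AS=A^{1/2}P_\eme A^{1/2}$. The only difference is cosmetic: you make explicit the containment $\eme^\bot\subseteq\overline{R(A^{1/2})}$ (equivalently $N(A)\subseteq\eme$) that the paper uses tacitly when it places $(I-P_\eme)A^{1/2}x-A^{1/2}z$ in $N(A)\cap\overline{R(A)}=\{0\}$.
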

\begin{proof}
Notice that $N(\AS)=A^{-1/2}(\overline{A^{1/2}(\ese^\bot)})$ and $N(A-\AS)=A^{-1}(\ese).$

$1\Leftrightarrow 2\Leftrightarrow 3.$ It follows by Proposition \ref{sumarangos} and Proposition \ref{propiedadesShort}.

$3\Leftrightarrow 4.$ Assume that $R(\AS)\subseteq R(A)$ and let $y=A^{1/2}x\in R(A^{1/2}).$ Hence, $A^{1/2}x=P_{\eme}A^{1/2}x+(I-P_\eme)A^{1/2}x.$ Applying $A^{1/2}$ in both sides, we get that $Ax=A^{1/2}(I-P_\eme) A^{1/2}x+\AS x.$ Thus, since $R(\AS)\subseteq R(A)$ we obtain that $A^{1/2}(I-P_\eme) A^{1/2}x\in R(A).$ Therefore, $A^{1/2}(I-P_\eme) A^{1/2}x=Az$ for some $z\in\cH.$ From this, $(I-P_\eme) A^{1/2}x-A^{1/2}z\in N(A)\cap \overline{R(A)}=\{0\},$ i.e., $(I-P_\eme) A^{1/2}x=A^{1/2}z\in R(A^{1/2})\cap\eme^\bot.$ Therefore, $A^{1/2}x=P_{\eme}A^{1/2}x+(I-P_\eme)A^{1/2}x\in \eme\cap R(A^{1/2})\oplus\eme^\bot\cap R(A^{1/2})$ and item 3 is proved.

Conversely, assume that  $R(A^{1/2})=\eme\cap R(A^{1/2})\oplus\eme^\bot\cap R(A^{1/2}).$ Hence, $R(\AS)=R(A^{1/2}P_\eme A^{1/2})\subseteq A^{1/2}(\eme\cap R(A^{1/2}))\subseteq R(A).$

\end{proof}

\section{Compatibility and range additivity} \label{secaditividad} 

\begin{defi}\label{defcompa}
 Given $A\in L(\cH)^+$ and $\ese$ a closed subspace of $\cH$, we say that the pair $A,\ese$ is {\bf{compatible}} if $\cH=\ese+(A\ese)^\bot.$
\end{defi}

As shown in \cite{GMS3} the compatibility of a pair $A,\ese$ means that there exists a (bounded linear) projection with image $\ese$ which is Hermitian with respect to the semi-inner product $\pint{ \cdot, \cdot}_A$ defined by $\pint{\xi, \eta}_A=\pint{A\xi,\eta}.$ It is worth mentioning that compatibility gives a kind of weak version of invariant subspaces. In fact, if $A$ is a selfadjoint operator on $\cH$ and $\ese$ is a closed subspace, then $\ese$ is an invariant subspace for $A$ if $A\ese\subseteq \ese,$ which means that $P_\ese A P_\ese=P_\ese A.$ On the other side, $A,\ese$ are compatible if and only if $R(P_\ese A P_\ese)=R(P_\ese A);$ for a proof of this fact see \cite[Proposition 3.3]{GMS3}. In the recent paper \cite[Proposition 2.9]{ACG} it is proven that $A, \ese$ are compatible if and only if $(P_\ese A, I-P_\ese)\in\cR.$ In this section we shall complete this result by proving that $A,\ese$ are compatible if and only if $(A, I-P_\ese)\in\cR.$

\begin{prop}\label{compa}\cite[Theorem 3.8]{CMS0}
Let $A\in L(\cH)^+$ and $\ese$ a closed subspace of $\cH.$ The following conditions are equivalent:
\begin{enumerate}
\item $(A,\ese)$ is compatible.

\item $R([\ese^\bot]A)\subseteq R(A)$ and $N([\ese^\bot]A)=N(A)+\ese.$
\end{enumerate}
\end{prop}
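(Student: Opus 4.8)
The plan is to characterize compatibility of $A,\ese$ through the shorted operator $[\ese^\bot]A$ and then show that the two range/kernel conditions in item~2 are precisely what compatibility provides. The natural starting point is the fact recalled in the excerpt just before this proposition: if $E\in L(\cH)$ is an idempotent with $AE=E^*A$ and $R(E)=\ese$, then $[\ese^\bot]A=A(I-E)$. Compatibility is exactly the existence of such an $A$-selfadjoint projection $E$ onto $\ese$, so I would begin the implication $1\Rightarrow 2$ by fixing such an $E$ and writing $[\ese^\bot]A=A(I-E)$. From this representation the range inclusion $R([\ese^\bot]A)=R(A(I-E))\subseteq R(A)$ is immediate. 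For the kernel, I would compute $N(A(I-E))$: since $x\in N(A(I-E))$ means $A(I-E)x=0$, i.e. $(I-E)x\in N(A)$, we get $x\in E x+N(A)\subseteq R(E)+N(A)=\ese+N(A)$, and conversely every element of $N(A)+\ese$ lies in $N(A(I-E))$ because $(I-E)$ annihilates $\ese=R(E)$ and maps $N(A)$ into $N(A)$ (here one uses $AE=E^*A$ to control $A(I-E)$ on $N(A)$). This gives $N([\ese^\bot]A)=N(A)+\ese$, which is the second half of item~2.

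For the converse $2\Rightarrow 1$, I would feed the two hypotheses into the machinery of Propositions~\ref{propiedadesShort} and~\ref{rangoAS} applied to the subspace $\ese^\bot$ in place of $\ese$. By Proposition~\ref{rangoAS} (with $\ese$ replaced by $\ese^\bot$), the condition $R([\ese^\bot]A)\subseteq R(A)$ is equivalent to $(\,[\ese^\bot]A,\;A-[\ese^\bot]A\,)\in\cR$ and to the direct-sum decomposition $R(A)=R(A-[\ese^\bot]A)\overset{.}{+}R([\ese^\bot]A)$. The remaining hypothesis $N([\ese^\bot]A)=N(A)+\ese$ is, by item~2 of Proposition~\ref{propiedadesShort}, the statement that the generic inclusion $N([\ese^\bot]A)\supseteq N(A)+\ese$ is an equality; equivalently $\overline{A^{1/2}(\ese)}\cap R(A^{1/2})=A^{1/2}(\ese)$. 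The goal is then to manufacture from these the decomposition $\cH=\ese+(A\ese)^\bot$ of Definition~\ref{defcompa}.

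The mechanism I expect to use is to produce the $A$-selfadjoint projection directly. Note $(A\ese)^\bot=N(P_\ese A)=A^{-1}(\ese^\bot)$, so compatibility $\cH=\ese+(A\ese)^\bot$ amounts to showing that every $x\in\cH$ splits as $x=s+n$ with $s\in\ese$ and $An\in\ese^\bot$. Writing $A=(A-[\ese^\bot]A)+[\ese^\bot]A$ and using that $R([\ese^\bot]A)\subseteq\ese^\bot$ by the very definition of the shorted operator, the range condition from Proposition~\ref{rangoAS} lets me peel off the $\ese^\bot$-part of $Ax$ through the projection onto the summand $R([\ese^\bot]A)$ along $R(A-[\ese^\bot]A)$, while the kernel condition guarantees that the corresponding correction can be chosen inside $N(A)+\ese$, so that subtracting its $\ese$-component lands the remainder in $(A\ese)^\bot$. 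Assembling this splitting for every $x$ yields $\cH=\ese+(A\ese)^\bot$.

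The main obstacle will be the converse direction, specifically the bookkeeping that converts the abstract range and kernel equalities into the concrete topological decomposition $\cH=\ese+(A\ese)^\bot$. The range condition gives a clean algebraic direct sum inside $R(A)$, but transporting it back through $A^{1/2}$ to a decomposition of all of $\cH$ requires carefully handling $N(A)$ and the non-closed ranges, and it is exactly here that the kernel hypothesis $N([\ese^\bot]A)=N(A)+\ese$ must be invoked to ensure the splitting is consistent on the kernel and to rule out the pathology $\overline{A^{1/2}(\ese)}\cap R(A^{1/2})\neq A^{1/2}(\ese)$. I would isolate this passage as the crux and, if a direct argument proves cumbersome, fall back on constructing the idempotent $E$ explicitly via the reduced solution furnished by Douglas' theorem applied to the inclusion $R([\ese^\bot]A)\subseteq R(A)$, and then verify $A$-selfadjointness and $R(E)=\ese$ using the kernel identity.
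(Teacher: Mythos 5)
The paper itself offers no proof of this proposition --- it is quoted verbatim from \cite[Theorem 3.8]{CMS0} --- so there is no in-house argument to measure you against; I will assess the proposal on its own terms. Your implication $1\Rightarrow 2$ is correct and complete modulo the cited identity $[\ese^\bot]A=A(I-E)$ for an $A$-selfadjoint idempotent $E$ with $R(E)=\ese$: the range inclusion is then immediate, and both kernel inclusions check out (for $n\in N(A)$ one has $A(I-E)n=-AEn=-E^*An=0$, which is exactly where $AE=E^*A$ enters, as you say).

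The converse as you primarily describe it --- ``peel off the $\ese^\bot$-part of $Ax$ through the projection onto $R([\ese^\bot]A)$ along $R(A-[\ese^\bot]A)$'' --- is not yet a proof: that projection is only algebraically defined on $R(A)$, and the claim that ``the correction can be chosen inside $N(A)+\ese$'' is precisely the point at issue. Your fallback via Douglas' theorem, however, does close the gap and should be promoted to the main argument. Write $\Sigma=[\ese^\bot]A$ and let $D$ be the reduced solution of $AX=\Sigma$ (this uses the hypothesis $R(\Sigma)\subseteq R(A)$), so that $R(D)\subseteq N(A)^\bot$. Since $R(\Sigma)\subseteq \ese^\bot$, every $Dx$ satisfies $ADx=\Sigma x\in\ese^\bot$, i.e.\ $R(D)\subseteq A^{-1}(\ese^\bot)=N(A-\Sigma)=(A\ese)^\bot$; hence $AD^2=\Sigma D=AD$, and since $R(D^2-D)\subseteq N(A)\cap N(A)^\bot=\{0\}$, $D$ is idempotent with $N(D)=N(\Sigma)$. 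One correction to your plan: the idempotent $E=I-D$ has $R(E)=N(D)=N(\Sigma)=N(A)+\ese$, \emph{not} $\ese$, so do not try to verify $R(E)=\ese$; instead conclude directly that $\cH=R(D)\overset{.}{+}N(D)=R(D)+N(A)+\ese\subseteq (A\ese)^\bot+\ese$, which is compatibility. Alternatively, entirely inside the paper's toolkit: Proposition \ref{rangoAS} turns the range hypothesis into $(\Sigma,A-\Sigma)\in\cR$, item 4 of Proposition \ref{propiedadesShort} gives $R(\Sigma)\cap R(A-\Sigma)=\{0\}$, so Theorem \ref{Alejandra} yields $N(\Sigma)+N(A-\Sigma)=\cH$; substituting $N(\Sigma)=N(A)+\ese$ and $N(A-\Sigma)=(A\ese)^\bot\supseteq N(A)$ gives $\ese+(A\ese)^\bot=\cH$ at once.
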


\begin{prop}
Let $A,B\in L(\cH)^+$ with closed ranges. The next conditions are equivalent:
\begin{enumerate}
\item $A, N(B)$ are compatible.
\item $N(A)+N(B)$ is closed.
\item $B, N(A)$ are compatible.
\item $R(A)+R(B)$ is closed.
\item $(A,B)\in\cR.$ 
\end{enumerate}
\end{prop}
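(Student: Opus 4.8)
The plan is to prove the five-way equivalence by exploiting the symmetry between $A$ and $B$ together with the machinery already assembled. First I observe that the roles of $A$ and $B$ are interchangeable: condition 2 ($N(A)+N(B)$ closed) is symmetric in $A,B$, and condition 5 ($(A,B)\in\cR$) is symmetric by Proposition 2.2(1). Thus items 1 and 3 are mirror images of each other under swapping $A\leftrightarrow B$, and it suffices to establish the chain $1\Leftrightarrow 2$, the equivalence $2\Leftrightarrow 4$, and $2\Leftrightarrow 5$; the equivalence $3\Leftrightarrow 2$ then follows by the same argument as $1\Leftrightarrow 2$ with the roles reversed. Throughout I will use that $R(A),R(B)$ closed forces various kernels and ranges to behave well; in particular $\overline{R(A^*)}=R(A^*)=R(A)=N(A)^\bot$ since $A$ is positive with closed range (and similarly for $B$).

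For $2\Leftrightarrow 5$ I would invoke Proposition \ref{sad}. That proposition gives $2\Rightarrow 3$ (in its own numbering, i.e. $N(A)+N(B)=\cH\Rightarrow(A,B)\in\cR$), but here I do not have $N(A)+N(B)=\cH$, only that the sum is \emph{closed}. The clean way around this is to pass to the subspace $\cM:=\overline{N(A)+N(B)}=N(A)+N(B)$ (closed by hypothesis) and note that $N(A)^\bot\cap N(B)^\bot=(N(A)+N(B))^\bot=\cM^\bot$, so that restricting attention to $\cM$ turns the closedness of $N(A)+N(B)$ into the genuine equality $N(A|_{\cM})+N(B|_{\cM})=\cM$. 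Then Proposition \ref{sad} applies on $\cM$. Alternatively, and more directly, I would use the equivalence $1\Leftrightarrow 2$ of Proposition \ref{sad}: $N(A)+N(B)$ closed is equivalent to $R(A^*)\dotplus R(B^*)$ being closed, and since $A,B$ are positive with closed ranges this reads $R(A)\dotplus R(B)$ closed; but Proposition 2.2(4) says that if $R(A)+R(B)$ is closed then $(A,B)\in\cR$, giving $2\Rightarrow 5$. This same observation simultaneously handles $2\Leftrightarrow 4$, since $R(A)+R(B)$ closed is by definition condition 4, and its equivalence with the closedness of $N(A)+N(B)$ is exactly Proposition \ref{sad}'s $1\Leftrightarrow 2$ transported through the positivity-and-closed-range identifications $R(A^*)=R(A)$, $R(B^*)=R(B)$.

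The genuinely new content is $1\Leftrightarrow 2$, relating compatibility of $A,N(B)$ to the closedness of $N(A)+N(B)$, and this is where I expect the main obstacle. The strategy is to apply Proposition \ref{compa} with $\ese=N(B)$, so that $\ese^\bot=\overline{R(B^*)}=R(B)$ (closed). Compatibility of $A,N(B)$ is then equivalent to $R([R(B)]A)\subseteq R(A)$ together with $N([R(B)]A)=N(A)+N(B)$. I would analyze the shorted operator $[R(B)]A$ using Proposition \ref{propiedadesShort}: item 2 identifies $N([R(B)]A)=A^{-1/2}\overline{A^{1/2}(N(B)^\bot)}=A^{-1/2}\overline{A^{1/2}(R(B))}$ and states that this always contains $N(A)+N(B)$, with equality precisely when $\overline{A^{1/2}(R(B))}\cap R(A^{1/2})=A^{1/2}(R(B))$. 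The core of the argument is to show that, given $R(A),R(B)$ closed, this equality of the shorted kernel with $N(A)+N(B)$ holds if and only if $N(A)+N(B)$ is itself closed, and that the range condition $R([R(B)]A)\subseteq R(A)$ comes for free (via Proposition \ref{rangoAS} applied with $\ese=R(B)$, whose equivalent condition 3 reads $R([R(B)]A)\subseteq R(A)$). The delicate point is controlling the closure $\overline{A^{1/2}(R(B))}$: I would argue that closedness of $N(A)+N(B)$ lets one replace this closure by $A^{1/2}(R(B))$ itself, while non-closedness produces a vector in $\overline{A^{1/2}(R(B))}\cap R(A^{1/2})$ not of the form $A^{1/2}(\text{element of }R(B))$, breaking the kernel equality. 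Pinning down this closure-versus-closedness dictionary, and doing so cleanly enough to run both implications, is the step I anticipate will require the most care.
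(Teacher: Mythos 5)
There are genuine gaps. First, you never prove any implication \emph{out of} condition 5: you argue $2\Rightarrow 5$ (via closedness of $R(A)+R(B)$ and the Fillmore--Williams fact), but the converse $5\Rightarrow 4$ (or $5\Rightarrow 2$) is simply absent, so your chain of equivalences is broken at its last link. The paper closes this with Crimmins' identity: if $(A,B)\in\cR$ then $R(A+B)=R(A)+R(B)=R(A^{1/2})+R(B^{1/2})=R((A+B)^{1/2})$, which forces $R(A+B)$ to be closed. Second, your route to $2\Leftrightarrow 4$ misquotes Proposition \ref{sad}: that proposition equates closedness of $\overline{R(A^*)}\overset{.}{+}\overline{R(B^*)}$ (a \emph{direct} sum, so it presupposes trivial intersection) with $N(A)+N(B)=\cH$, not with $N(A)+N(B)$ merely being closed. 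The correct tool, and the one the paper uses, is the elementary duality for closed subspaces $\ese,\ete$: $\ese+\ete$ is closed iff $\ese^\bot+\ete^\bot$ is closed (Deutsch), applied to $\ese=N(A)$, $\ete=N(B)$ with $N(A)^\bot=R(A)$, $N(B)^\bot=R(B)$.

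Third, the ``genuinely new content'' $1\Leftrightarrow 2$ is only a programme, not a proof: you explicitly defer the ``closure-versus-closedness dictionary,'' which is exactly the mathematical substance (the paper outsources it to \cite[Theorem 6.2]{GMS3}). Your sketch also contains two slips that would need repair. In Proposition \ref{propiedadesShort}(2) applied to $[R(B)]A$ one must take $\ese=R(B)$, so the relevant subspace is $A^{1/2}(\ese^\bot)=A^{1/2}(N(B))$, not $A^{1/2}(R(B))$ as you wrote; and Proposition \ref{rangoAS} only lists $R([\ese]A)\subseteq R(A)$ among \emph{equivalent} conditions, it does not assert it --- what actually gives it ``for free'' here is Proposition \ref{propiedadesShort}(1) together with $R(A^{1/2})=R(A)$ when $R(A)$ is closed. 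For what it is worth, your strategy can be completed: one direction of the kernel equality is immediate because $N([R(B)]A)$ is the kernel of a bounded operator, hence closed; for the other, since $A^{1/2}$ is bounded below on $N(A)^\bot$, the subspace $A^{1/2}(N(B))$ is closed precisely when $P_{N(A)^\bot}(N(B))$ is, i.e.\ precisely when $N(A)+N(B)$ is closed, which makes the equality condition in Proposition \ref{propiedadesShort}(2) hold trivially. But as written, the proposal does not yet constitute a proof.
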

\begin{proof}
$1\Leftrightarrow 2.$ \cite[Theorem 6.2]{GMS3}.

$2\Leftrightarrow 3.$ Idem.

$2\Leftrightarrow 4.$ It follows from the general fact that, for closed subspaces $\ese,\ete$ then $\ese+\ete$ is closed if and only if $\ese^\bot+\ete^\bot$ is closed. See \cite[Theorem 13]{Deu}.

$4\Rightarrow 5.$ See \cite[Corollary 3]{FW}.

$5\Rightarrow 4.$ $R(A+B)=R(A)+R(B)=R(A^{1/2})+R(B^{1/2})=R((A+B)^{1/2})$ by Crimmins' identity. Then $R(A+B)$ is closed and so $R(A)+R(B)$ is closed .
\end{proof}

\begin{teo}\label{Rcompa}
Let $A,B\in L(\cH)^+$ and suppose that $B$ has a closed range. The following conditions are equivalent:
\begin{enumerate}
\item $A, N(B)$ are compatible.
\item $(A,B)\in\cR.$
\item $R(B)\overset{.}{+}\overline{AN(B)}$ is closed.
\end{enumerate}
\end{teo}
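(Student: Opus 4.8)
The plan is to fix $\ese=N(B)$, so that $\ese^\bot=R(B)$ is closed, and to pivot the whole argument on the shorted operator $[R(B)](A+B)$. By Corollary~\ref{AS=S} one has $[R(B)](A+B)=[R(B)]A+B$, with range exactly $R(B)$ and kernel exactly $N(B)$. Writing $E=[R(B)](A+B)$ and $D=A-[R(B)]A$ gives a decomposition $A+B=D+E$ into positive operators, with $D\geq 0$ (since $[R(B)]A\leq A$ by the very definition of the shorted operator), $R(E)=R(B)$ closed and $N(E)=N(B)$. The first computation I would record is $N(D)$: using Krein's factorization $[R(B)]A=A^{1/2}P_\eme A^{1/2}$ with $\eme=A^{-1/2}(R(B))$, one gets $D=A^{1/2}(I-P_\eme)A^{1/2}$, hence $N(D)=\{x:A^{1/2}x\in\eme\}=A^{-1}(R(B))$. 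Since $A^{-1}(R(B))=A^{-1}(N(B)^\bot)=(AN(B))^\bot$, this already ties $D$ to the compatibility condition.

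For $1\Leftrightarrow 2$ I would reduce range additivity of $(A,B)$ to that of $(D,E)$. Because $R([R(B)]A)\subseteq R(B)$ (Proposition~\ref{propiedadesShort}, item~1), one has $R(A)+R(B)=R(D)+R(B)=R(D)+R(E)$, while trivially $R(A+B)=R(D+E)$; hence $(A,B)\in\cR$ if and only if $(D,E)\in\cR$. The feature that makes $(D,E)$ tractable is $R(D)\cap R(E)=\{0\}$: indeed $R(E)=R(B)$ and, by Proposition~\ref{propiedadesShort}, item~4, $R(D^{1/2})\cap R(B)=\{0\}$, so a fortiori $R(D)\cap R(B)=\{0\}$. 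This is exactly the hypothesis of the ``in particular'' clause of Theorem~\ref{Alejandra}, which then yields $(D,E)\in\cR\Leftrightarrow N(D)+N(E)=\cH$. Substituting $N(D)=A^{-1}(R(B))$ and $N(E)=N(B)$ turns the right-hand side into $\cH=N(B)+(AN(B))^\bot$, which is precisely compatibility of $A$ and $N(B)$; this closes $1\Leftrightarrow 2$.

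For $1\Leftrightarrow 3$ I would work purely at the level of the two closed subspaces $\cW=R(B)$ and $\ve=\overline{AN(B)}$, bypassing $D$ entirely. Their orthocomplements are $\cW^\bot=N(B)$ and, as above, $\ve^\bot=(AN(B))^\bot=A^{-1}(R(B))$, so that compatibility of $A,N(B)$ reads exactly $\cH=\cW^\bot+\ve^\bot$. Now $\cW^\bot+\ve^\bot=\cH$ holds if and only if this sum is simultaneously dense and closed; density is equivalent to $(\cW^\bot+\ve^\bot)^\bot=\cW\cap\ve=\{0\}$, and by the closed-subspace duality already used in the preceding proposition ($\cX+\cY$ closed $\Leftrightarrow$ $\cX^\bot+\cY^\bot$ closed, cf.\ \cite{Deu}) closedness of $\cW^\bot+\ve^\bot$ is equivalent to closedness of $\cW+\ve$. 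Thus compatibility is equivalent to ``$\cW\cap\ve=\{0\}$ and $\cW+\ve$ closed,'' which is exactly the assertion that $R(B)\overset{.}{+}\overline{AN(B)}$ is closed, i.e.\ condition~3.

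The step I expect to demand the most care is the reduction inside $1\Leftrightarrow 2$: one is tempted to invoke the earlier five-condition proposition, but that requires both summands to have closed range and $D$ generally does not. The correct substitute is to verify the trivial-intersection hypothesis $R(D)\cap R(E)=\{0\}$ and to compute $N(D)=A^{-1}(R(B))$ via Krein's factorization; these two load-bearing facts, together with $R([R(B)]A)\subseteq R(B)$, are exactly what lets Theorem~\ref{Alejandra} do the work. Once they are in place, both equivalences follow cleanly — $1\Leftrightarrow 2$ from Theorem~\ref{Alejandra} and $1\Leftrightarrow 3$ from the density/closedness duality — and no appeal to Proposition~\ref{compa} is actually needed.
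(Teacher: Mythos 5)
Your proof is correct, but the route through the central equivalence $1\Leftrightarrow 2$ is genuinely different from the paper's. The paper also passes to $A+B$ and uses Corollary \ref{AS=S}, but its engine is Proposition \ref{compa} (the characterization of compatibility by $R([\ese^\bot]A)\subseteq R(A)$ and $N([\ese^\bot]A)=N(A)+\ese$, imported from \cite{CMS0}): it observes that $A,N(B)$ is compatible iff $A+B,N(B)$ is, applies Proposition \ref{compa} to $A+B$, and reads off from Corollary \ref{AS=S} that this collapses to $R(B)\subseteq R(A+B)$, i.e.\ to $(A,B)\in\cR$ by Proposition \ref{sumarangos}. You instead split $A+B=D+E$ with $D=A-[R(B)]A$ and $E=[R(B)]A+B$, check the two load-bearing facts $R(D)\cap R(E)=\{0\}$ (item 4 of Proposition \ref{propiedadesShort}) and $N(D)=A^{-1}(R(B))=(AN(B))^\bot$ (Krein's factorization), and then let the purely algebraic Theorem \ref{Alejandra} convert range additivity of the disjoint-range pair $(D,E)$ into $N(D)+N(E)=\cH$, which is literally Definition \ref{defcompa}. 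All the individual steps check out: $R(A)+R(B)=R(D)+R(E)$ follows from $R([R(B)]A)\subseteq R(B)$, $N(E)=N(B)$ is item 2 of Corollary \ref{AS=S}, and your computation of $N(D)$ agrees with the identity $N(A-\AS)=A^{-1}(\ese)$ recorded in the proof of Proposition \ref{rangoAS}. What your version buys is self-containment -- it replaces the external citation to \cite[Theorem 3.8]{CMS0} by results proved or fully stated in the paper, and it makes visible the mechanism (a Lebesgue-type splitting of $A+B$ along $R(B)$, in the spirit of Proposition \ref{rangosdisjuntos} and Corollary \ref{RA}); what it costs is length, since the paper's argument is three lines once Proposition \ref{compa} is granted. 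Your $1\Leftrightarrow 3$ is simply the detailed expansion of the paper's one-line appeal to \cite[Theorem 13]{Deu}, correctly separating the density part ($R(B)\cap\overline{AN(B)}=\{0\}$) from the closedness part handled by the duality.
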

\begin{proof}
$1\Leftrightarrow 2.$
Let $\ese=N(B).$ First observe that $A,\ese$ are compatible if and only if $A+B,\ese$ are compatible. Indeed, $\ese+((A+B)\ese)^\bot=\ese+(A\ese)^\bot.$ Hence, by Proposition \ref{compa}, $A,\ese$ are compatible if and only if $R([\ese^\bot](A+B))\subseteq R(A+B)$ and $N([\ese^\bot](A+B))=\ese+N(A+B)$ or, equivalently, by Corollary \ref{AS=S}, $\ese^\bot\subseteq R(A+B)$ (notice that $N(A+B)=N(A)\cap N(B)\subseteq \ese)$.  Summarizing, $A,\ese$ are compatible if and only if $R(B)=\ese^\bot\subseteq R(A+B),$ i.e., $R(A+B)=R(A)+R(B).$

$1\Leftrightarrow 3.$ It follows applying  \cite[Theorem 13]{Deu}. 
\end{proof}

\begin{cor}
Let $A\in L(\cH)^+$ and $\ese$ a closed subspace of $\cH.$ The next conditions are equivalent:
\begin{enumerate}
\item $A,\ese$ are compatible;
\item $(A,I-P_\ese)\in\cR.$
\item $\ese^\bot\overset{.}{+}\overline{A \ese}$ is closed.
\end{enumerate}	
\end{cor}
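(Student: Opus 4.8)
The plan is to derive this corollary directly from Theorem~\ref{Rcompa} by applying it to the operator $B = I - P_\ese$. The key observation is that $I - P_\ese = P_{\ese^\bot}$ is an orthogonal projection, hence a positive operator with closed range, so the hypotheses of Theorem~\ref{Rcompa} are satisfied. First I would record the elementary identities $N(I - P_\ese) = R(P_\ese) = \ese$ and $R(I - P_\ese) = \ese^\bot$. These are immediate from the fact that $P_\ese$ is the orthogonal projection onto $\ese$, so $I - P_\ese$ projects orthogonally onto $\ese^\bot$.

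With these identities in hand, each of the three conditions in the present corollary matches the corresponding condition of Theorem~\ref{Rcompa} after the substitution $B = I - P_\ese$. For the equivalence $1 \Leftrightarrow 2$, I would note that $N(B) = \ese$, so \emph{$A, N(B)$ are compatible} becomes exactly \emph{$A, \ese$ are compatible}, while condition~2 of the theorem, $(A,B) \in \cR$, becomes $(A, I - P_\ese) \in \cR$. For the equivalence with condition~3, I would substitute $R(B) = \ese^\bot$ and $N(B) = \ese$ into the theorem's third condition $R(B) \overset{.}{+} \overline{AN(B)}$ is closed, which then reads precisely $\ese^\bot \overset{.}{+} \overline{A\ese}$ is closed.

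Thus the proof is essentially a verification that $I - P_\ese$ is a legitimate choice of $B$ in the theorem together with a bookkeeping of ranges and nullspaces. I do not expect any genuine obstacle here, since all the analytic content is already contained in Theorem~\ref{Rcompa}; the only point requiring a moment's care is confirming that $I - P_\ese$ has closed range (which holds because it is itself an orthogonal projection, its range $\ese^\bot$ being closed). A compact way to present the argument is:

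\begin{proof}
Apply Theorem~\ref{Rcompa} with $B = I - P_\ese$. Since $I - P_\ese = P_{\ese^\bot}$ is an orthogonal projection, it is positive and has closed range $R(I - P_\ese) = \ese^\bot$, and moreover $N(I - P_\ese) = \ese$. With this substitution, condition~1 of the theorem ($A, N(B)$ compatible) becomes condition~1 here ($A, \ese$ compatible); condition~2 of the theorem ($(A,B) \in \cR$) becomes condition~2 here ($(A, I - P_\ese) \in \cR$); and condition~3 of the theorem ($R(B) \overset{.}{+} \overline{AN(B)}$ closed) becomes condition~3 here ($\ese^\bot \overset{.}{+} \overline{A\ese}$ closed). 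The stated equivalences follow.
\end{proof}
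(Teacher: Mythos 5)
Your proof is correct and is exactly the argument the paper intends: the corollary is stated immediately after Theorem~\ref{Rcompa} precisely as the specialization $B=I-P_\ese=P_{\ese^\bot}$, and your bookkeeping of $N(I-P_\ese)=\ese$ and $R(I-P_\ese)=\ese^\bot$ is all that is needed.
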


\begin{prop}\label{rangosdisjuntos}
Let $A,B\in L(\cH)^+$ such that $R(A)\cap\overline{R(B)}=\{0\}.$ Then, $(A,B)\in\cR$ if and only if $A, N(B)$ are compatible.
\end{prop}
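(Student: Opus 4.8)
The plan is to show that, under the hypothesis $R(A)\cap\overline{R(B)}=\{0\}$, both conditions in the statement are equivalent to the single subspace identity $N(A)+N(B)=\cH$, and hence to each other. The device that makes everything collapse is an explicit description of the subspace $(AN(B))^\bot$ that enters the definition of compatibility.

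First I would put $\eme=N(B)$ and record $\eme^\bot=\overline{R(B)}$, which holds because $B\in L(\cH)^+$ is self-adjoint. The key computation is to identify $(A\eme)^\bot$. Since $A=A^*$, a vector $x$ belongs to $(AN(B))^\bot$ if and only if $\langle Ax,n\rangle=0$ for all $n\in N(B)$, that is, $Ax\in N(B)^\bot=\overline{R(B)}$; hence $(AN(B))^\bot=A^{-1}(\overline{R(B)})$. Now the hypothesis enters: if $x\in A^{-1}(\overline{R(B)})$ then $Ax\in R(A)\cap\overline{R(B)}=\{0\}$, so $x\in N(A)$, while the reverse inclusion $N(A)\subseteq A^{-1}(\overline{R(B)})$ is immediate. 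Therefore $(AN(B))^\bot=N(A)$, and the compatibility condition $\cH=N(B)+(AN(B))^\bot$ from Definition \ref{defcompa} becomes exactly $\cH=N(A)+N(B)$.

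To finish I would appeal to Theorem \ref{Alejandra}: since $R(A)\cap R(B)\subseteq R(A)\cap\overline{R(B)}=\{0\}$, range additivity $(A,B)\in\cR$ is equivalent to $N(A)+N(B)=\cH$ (equivalently, one may invoke the final clause of Proposition \ref{sad}). Combining this with the previous paragraph, both $(A,B)\in\cR$ and the compatibility of $A,N(B)$ are equivalent to $N(A)+N(B)=\cH$, hence to each other, which is the claim.

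The only step that genuinely uses the hypothesis — and hence the main point to get right — is the equality $(AN(B))^\bot=N(A)$. The inclusion $N(A)\subseteq(AN(B))^\bot$ holds for every positive $A$, so the implication $(A,B)\in\cR\Rightarrow A,N(B)$ compatible needs nothing beyond $R(A)\cap R(B)=\{0\}$; it is the reverse inclusion, where one concludes $Ax=0$ from $Ax\in R(A)\cap\overline{R(B)}$, that forces the stronger disjointness with the closure $\overline{R(B)}$ rather than with $R(B)$ alone. This is precisely the feature that distinguishes the present hypothesis from the closed-range hypothesis of Theorem \ref{Rcompa}; everything else in the argument is formal.
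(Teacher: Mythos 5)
Your proof is correct and follows essentially the same route as the paper: both arguments reduce the two conditions to the single identity $N(A)+N(B)=\cH$, using Theorem \ref{Alejandra} (or Proposition \ref{sad}) on one side and the computation $(AN(B))^\bot=A^{-1}(N(B)^\bot)=A^{-1}(\overline{R(B)})=N(A)$ — the only place the hypothesis $R(A)\cap\overline{R(B)}=\{0\}$ is needed — on the other. Your closing remark correctly isolates where the closure matters; no gaps.
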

\begin{proof}
Since, $R(A)\cap R(B)=\{0\}$ then $R(A+B)=R(A)+R(B)$ if and only if $\cH=N(A)+N(B).$ Now, $N(A)+N(B)=A^{-1}(\{0\})+N(B)=A^{-1}(\overline{R(B)})+N(B)=A^{-1}(N(B)^\bot)+N(B).$ Therefore, $R(A+B)=R(A)+R(B)$ if and only if $\cH=A^{-1}(N(B)^\bot)+N(B),$ i.e., if and only if $A, N(B)$ are compatible.
\end{proof}

The next example shows that the compatibility of the pair $A, N(B)$ does not imply, in general, that $(A,B)\in\cR.$
\begin{exa}

Considering $C$ and $D$ as in Example \ref{ejemplo}.2, we define
$A=\left(
\begin{array}[pos]{cc}
0 & 0\\
0	 & C	
\end{array}
\right)$ and 
$B=\left(
\begin{array}[pos]{cc}
0 & 0\\
0	 & D	
\end{array}
\right).$
Clearly, $(A,B)\notin \cR.$ However, $A, N(B)$ are compatible. 
\end{exa}

\begin{cor}\label{RA} Let $A\in L(\cH)^+$ and $\ese$ a closed subspace of $\cH$.
The following conditions are equivalent:
\begin{enumerate}
\item $R(A)=R(A-\AS)\overset{.}{+}R(\AS);$
\item $A-\AS, N(\AS)$ are compatible;
\item $A, N(\AS)$ are compatible.

\end{enumerate}
\end{cor}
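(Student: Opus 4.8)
The plan is to prove the chain $1 \Leftrightarrow 2 \Leftrightarrow 3$ by reducing each equivalence to a result already established for the pair $(A-\AS,\AS)$. I would open with the equivalence $2 \Leftrightarrow 3$, which is immediate once one notices that $A$ and $A-\AS$ restrict to the same operator on $N(\AS)$: for $x\in N(\AS)$ one has $\AS x=0$, hence $(A-\AS)x=Ax$. Therefore $(A-\AS)\,N(\AS)=A\,N(\AS)$, and the two compatibility requirements $\cH=N(\AS)+((A-\AS)N(\AS))^\bot$ and $\cH=N(\AS)+(A\,N(\AS))^\bot$ coincide verbatim. This disposes of $2\Leftrightarrow 3$ with no further argument.

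For $1\Leftrightarrow 2$ I would combine two of the earlier propositions. By Proposition \ref{rangoAS}, condition 1, namely $R(A)=R(A-\AS)\overset{.}{+}R(\AS)$, is equivalent to $(\AS,A-\AS)\in\cR$; since $\cR$ is symmetric, this is the same as $(A-\AS,\AS)\in\cR$. Next, Proposition \ref{propiedadesShort}(4) supplies exactly the hypothesis $\overline{R(\AS)}\cap R(A-\AS)=\{0\}$ required to apply Proposition \ref{rangosdisjuntos} to the pair $(A-\AS,\AS)$, taking $A-\AS$ as the first operator and $\AS$ in the role of $B$ so that $N(B)=N(\AS)$. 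That proposition then gives $(A-\AS,\AS)\in\cR$ if and only if $A-\AS,N(\AS)$ are compatible, which is condition 2. Stringing these equivalences together yields $1\Leftrightarrow 2$, and together with $2\Leftrightarrow 3$ the corollary follows.

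The argument involves no hard analysis; its content lies entirely in selecting the right prior results and applying them with the correct orientation. The one point demanding care is precisely that orientation in the appeal to Proposition \ref{rangosdisjuntos}: the disjointness furnished by Proposition \ref{propiedadesShort}(4) is $R(A-\AS)\cap\overline{R(\AS)}=\{0\}$, with the closure taken on $R(\AS)$, so the pair must be entered as $(A-\AS,\AS)$ and not as $(\AS,A-\AS)$; the reversed pair would demand $R(\AS)\cap\overline{R(A-\AS)}=\{0\}$, which is not among the guaranteed properties of the shorted operator. I expect this bookkeeping to be the only place where a misstep is possible.
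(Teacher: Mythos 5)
Your proof is correct and follows essentially the same route as the paper: the equivalence $1\Leftrightarrow 2$ via Proposition \ref{propiedadesShort}(4) and Proposition \ref{rangosdisjuntos} applied to the pair $(A-\AS,\AS)$, and $2\Leftrightarrow 3$ from the decomposition $A=\AS+(A-\AS)$, which makes $A$ and $A-\AS$ agree on $N(\AS)$. Your remark about the orientation of the pair in Proposition \ref{rangosdisjuntos} is exactly the right point of care.
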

\begin{proof}
$1\Leftrightarrow 2.$ It follows from Proposition \ref{propiedadesShort} and Proposition \ref{rangosdisjuntos}.  

$2\Leftrightarrow 3.$ It follows from the fact that $A=\AS+A-\AS.$
\end{proof}

\section{The Fill-Fishkind formula}

This last section is devoted to the Fill-Fishkind formula. In order to identify certain Moore-Penrose inverses of products of orthogonal projections, the next theorem (due to Penrose and Greville) will be helpful.

\begin{teo}\label{PQ+}
If $Q\in L(\cH)$ is an oblique projection then $Q^\dagger = P_{N(Q)^\bot}P_{R(Q)}$. Conversely, if $M$ and $N$ are closed subspaces of $\cH$ such that $P_MP_N$ has closed range,
then $(P_MP_N)^\dagger$ is the unique oblique projection with range $R(P_NP_M)$ and nullspace
$N(P_NP_M)$.
\end{teo}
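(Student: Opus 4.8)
The plan is to treat the two implications separately, since the direct statement is a straightforward verification while the converse turns on a single structural fact. Throughout I would use the standard properties of the Moore-Penrose inverse: for $T\in L(\cH)$ with closed range one has $TT^\dagger=P_{R(T)}$, $T^\dagger T=P_{R(T^*)}$, $R(T^\dagger)=R(T^*)$ and $N(T^\dagger)=N(T^*)$.

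For the direct implication, let $Q$ be an oblique projection (so $R(Q)$ is automatically closed, and $Q^\dagger$ is bounded) and set $P:=P_{R(Q)}$ and $P':=P_{N(Q)^\bot}=P_{R(Q^*)}$. The key observation is a quadruple of absorption identities coming from the fact that an idempotent acts as the identity on its range: since $P$ maps into $R(Q)$ and $Q$ fixes $R(Q)$ one gets $QP=P$ and $PQ=Q$; applying the same reasoning to the idempotent $Q^*$ gives $Q^*P'=P'$ and $P'Q^*=Q^*$, and taking adjoints yields $P'Q=P'$ and $QP'=Q$. With these in hand I would simply check that $X:=P'P=P_{N(Q)^\bot}P_{R(Q)}$ satisfies the four defining equations of the Moore-Penrose inverse. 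Indeed $QX=QP'P=QP=P$ and $XQ=P'PQ=P'Q=P'$, which are self-adjoint, while $QXQ=Q$ and $XQX=X$ each reduce to a two- or three-step cancellation using the absorption identities. Hence $X=Q^\dagger$.

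For the converse, write $Q:=P_MP_N$, so that $Q^*=P_NP_M$; the hypothesis that $R(Q)$ is closed makes $Q^\dagger$ bounded and $R(Q^*)$ closed. The range and nullspace are then immediate, $R(Q^\dagger)=R(Q^*)=R(P_NP_M)$ and $N(Q^\dagger)=N(Q^*)=N(P_NP_M)$, so everything reduces to showing that $Q^\dagger$ is idempotent. The geometric input is that $R(Q)\subseteq M$ and $R(Q^*)\subseteq N$: since $R(Q^\dagger)=R(Q^*)\subseteq N$ we obtain $P_NQ^\dagger=Q^\dagger$, and since $N(Q^\dagger)=N(Q^*)=R(Q)^\bot\supseteq M^\bot$ we obtain $Q^\dagger P_M=Q^\dagger$.

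The main obstacle is exactly this idempotency, and it dissolves in one line from the two absorption relations:
\[
(Q^\dagger)^2=Q^\dagger P_M\,Q^\dagger=Q^\dagger P_M\,P_NQ^\dagger=Q^\dagger(P_MP_N)Q^\dagger=Q^\dagger QQ^\dagger=Q^\dagger,
\]
where I use $Q^\dagger=Q^\dagger P_M$ on the left factor, $Q^\dagger=P_NQ^\dagger$ on the right factor, and the Penrose identity $Q^\dagger QQ^\dagger=Q^\dagger$. Thus $Q^\dagger$ is a bounded idempotent, so $\cH=R(Q^\dagger)\overset{.}{+}N(Q^\dagger)=R(P_NP_M)\overset{.}{+}N(P_NP_M)$; finally, uniqueness is automatic, since an oblique projection is completely determined by its range and nullspace once these are known to be complementary.
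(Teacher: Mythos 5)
Your argument is correct and complete. Note that the paper does not actually prove this theorem: it only cites Penrose, Greville and \cite[Theorem 4.1]{CM}, so your proposal supplies a self-contained verification where the paper defers to the literature. Both halves of your proof are sound. In the direct implication the four absorption identities $QP=P$, $PQ=Q$, $P'Q=P'$, $QP'=Q$ (with $P=P_{R(Q)}$, $P'=P_{N(Q)^\bot}$) do reduce the four Penrose equations for $X=P'P$ to one-line cancellations, and the products $QX=P$ and $XQ=P'$ are visibly self-adjoint, which is the whole point. In the converse, the two absorption relations $Q^\dagger=P_NQ^\dagger=Q^\dagger P_M$ (coming from $R(Q^\dagger)=R(Q^*)\subseteq N$ and $M^\bot\subseteq R(Q)^\bot=N(Q^\dagger)$) give idempotency of $Q^\dagger$ in one line via $Q^\dagger QQ^\dagger=Q^\dagger$, and the range and nullspace identifications $R(Q^\dagger)=R(Q^*)$, $N(Q^\dagger)=N(Q^*)$ are standard for closed-range operators. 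This is essentially the classical Penrose--Greville argument, transplanted verbatim to Hilbert space once one observes that closedness of $R(P_MP_N)$ is all that is needed for $Q^\dagger$ to be bounded; it is arguably more elementary than the polar-decomposition route taken in the cited reference \cite{CM}.
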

\begin{proof}
For matrices, the proof appears in the paper by Penrose \cite[Lemma 2.3]{P} and Greville \cite[Theorem 1]{G}. For general Hilbert spaces, see \cite[Theorem 4.1]{CM}.
\end{proof}

We prove now the extension of the theorem by Fill and Fishkind \cite[Theorem 3]{FF} mentioned in the introduction.

\begin{teo}
Let $A,B\in L(\cH,\cK)$ such that $R(A), R(B)$ are closed, $R(A)\cap R(B)=R(A^*)\cap R(B^*)=\{0\}$ and $(A,B)\in\cR$ and $(A^*,B^*)\in\cR$. Hence,
\begin{equation}\label{formula}
(A+B)^\dagger=(I-S)A^\dagger (I-T)+SB^\dagger T,
\end{equation}
where $$S=(P_{N(B)^\bot}P_{N(A)})^\dagger=Q_{P_{N(A)}(N(B)^\bot)//N(B)}$$ 
and $$T=(P_{N(A^*)}P_{N(B^*)^\bot})^\dagger=Q_{R(B)//R(A)+R(A)^\bot\cap R(B)^\bot}.$$
\end{teo}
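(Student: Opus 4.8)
The plan is to establish formula \eqref{formula} by proving a Douglas-type identity $(A+B)X=I$ restricted appropriately, but more directly I would verify that the claimed operator $M:=(I-S)A^\dagger(I-T)+SB^\dagger T$ satisfies the four Moore-Penrose equations for $A+B$, or equivalently (and more economically) that $M(A+B)$ and $(A+B)M$ are the orthogonal projections onto $R((A+B)^*)=\overline{R(A^*+B^*)}$ and $R(A+B)=\overline{R(A+B)}$ respectively. The first observation to nail down is that all relevant ranges are genuinely closed: since $R(A),R(B)$ are closed and $(A,B),(A^*,B^*)\in\cR$, Crimmins' identity gives $R(A+B)=R(A)+R(B)$ closed and $R(A^*+B^*)=R(A^*)+R(B^*)$ closed, and combined with $R(A)\cap R(B)=\{0\}=R(A^*)\cap R(B^*)$ we obtain the \emph{direct} decompositions $R(A+B)=R(A)\overset{.}{+}R(B)$ and $R((A+B)^*)=R(A^*)\overset{.}{+}R(B^*)$. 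Thus $(A+B)^\dagger$ is well defined with closed range.

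The heart of the argument is the identification of $S$ and $T$ as the two oblique projections displayed in the statement; here I would invoke Theorem \ref{PQ+}. For $S=(P_{N(B)^\bot}P_{N(A)})^\dagger$ I must first check that $P_{N(B)^\bot}P_{N(A)}$ has closed range — this follows because $R(A),R(B)$ closed with $(A,B)\in\cR$ forces $N(A)+N(B)=\cH$ (Proposition \ref{sad}, since $R(A)\cap R(B)=\{0\}$), hence $N(A)^\bot+N(B)^\bot$ is closed and the product of the projections has closed range. Theorem \ref{PQ+} then gives $S=Q_{R(P_{N(A)}P_{N(B)^\bot})//N(P_{N(A)}P_{N(B)^\bot})}$, and I would simplify the range to $P_{N(A)}(N(B)^\bot)$ and the kernel to $N(B)$ using the disjointness hypotheses; the dual computation handles $T$ with ranges $R(B)$ and $R(A)+R(A)^\bot\cap R(B)^\bot$. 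The geometric content is that $S$ projects along $N(B)$ and $I-S$ along the complementary piece, matching the splitting $R((A+B)^*)=R(A^*)\overset{.}{+}R(B^*)$, while $T$ and $I-T$ implement the splitting of $R(A+B)$.

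With these projections in hand, the verification reduces to bookkeeping on the two direct-sum decompositions. The key relations I expect to use repeatedly are $A^\dagger(I-T)=A^\dagger$ and $B^\dagger T=B^\dagger$ (each Moore-Penrose inverse kills the complementary summand in $\cK$), together with $(I-S)$ and $S$ splitting the domain-side projection $P_{R((A+B)^*)}$. Writing $(A+B)M$ and expanding, the cross terms $(A+B)(I-S)A^\dagger(I-T)$ and $(A+B)SB^\dagger T$ should collapse via $A A^\dagger=P_{R(A)}$, $BB^\dagger=P_{R(B)}$ and the fact that the oblique projections $T,I-T$ are precisely $Q_{R(B)//R(A)+\cdots}$ and its complement. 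The main obstacle, and where I would spend most of the care, is controlling the mixed products $B A^\dagger$ and $A B^\dagger$ together with $S$ and $T$: a priori $A^\dagger$ sees $R(A)$ but $S$ lives on the $B$-side, so I must show these cross contributions cancel exactly, which is where the hypotheses $(A^*,B^*)\in\cR$ and the range-disjointness are genuinely consumed. Once the cross terms are shown to vanish against the oblique projections, $(A+B)M=P_{R(A+B)}$ and $M(A+B)=P_{R((A+B)^*)}$ follow, and uniqueness of the Moore-Penrose inverse closes the proof.
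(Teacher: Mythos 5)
Your overall skeleton matches the paper's (establish closedness of the relevant ranges, identify $S$ and $T$ as oblique projections via Theorem \ref{PQ+}, then verify Moore--Penrose identities), but the computation you propose rests on identities that are false. You claim as ``key relations'' that $A^\dagger(I-T)=A^\dagger$ and $B^\dagger T=B^\dagger$. Since $R(T)=R(B)$ and $N(A^\dagger)=R(A)^\bot$, the first identity is equivalent to $A^\dagger T=0$, i.e.\ to $R(B)\subseteq R(A)^\bot$; likewise the second forces $R(A)\subseteq R(B)^\bot$. These hold only when $R(A)\perp R(B)$, which is not assumed --- the whole point of the oblique projections $S,T$ in the Fill--Fishkind formula is that the decomposition $R(A+B)=R(A)\overset{.}{+}R(B)$ is not orthogonal. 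If your relations were true, the formula would collapse to $(I-S)A^\dagger+SB^\dagger T$, which is not what is being proved. The genuine content of the verification is precisely the computation you defer (``the cross terms should collapse''): one must actually expand and obtain, as the paper does, $(A+B)\bigl((I-S)A^\dagger(I-T)+SB^\dagger T\bigr)=Q_1+T$ with $Q_1=Q_{R(A)//R(B)+R(A)^\bot\cap R(B)^\bot}$, and $\bigl((I-S)A^\dagger(I-T)+SB^\dagger T\bigr)(A+B)=I-(I-S)P_{N(A)}$, and then check that these are the orthogonal projections $P_{R(A+B)}$ and $P_{R(A^*+B^*)}$. That step is where the hypotheses are consumed, and it is absent from your proposal.

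A second, smaller error: you assert that verifying $(A+B)M=P_{R(A+B)}$ and $M(A+B)=P_{R((A+B)^*)}$ is \emph{equivalent} to verifying the four Moore--Penrose equations. It is not: these two conditions give $(A+B)M(A+B)=A+B$ and the two symmetry conditions, but not $M(A+B)M=M$. Indeed, if $Z\neq 0$ satisfies $R(Z)\subseteq N(A+B)$ and $R(A+B)\subseteq N(Z)$, then $M+Z$ satisfies both projection identities without being the Moore--Penrose inverse. This is why the paper verifies the third condition $M(A+B)M=M$ separately (its item iii)), using $EP_{N(A)}S=0=EP_{N(A)}A^\dagger$ for $E=Q_{N(B)//P_{N(A)}(N(B)^\bot)}$. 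Your preliminary reductions (closedness of $R(A+B)$ and of $R(P_{N(B)^\bot}P_{N(A)})$, and the identification of $S$ and $T$ through Theorem \ref{PQ+}) are correct and agree with the paper.
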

\begin{proof} We show first that all Moore-Penrose inverses which appear in (\ref{formula}) are bounded. In fact, by Proposition \ref{sad}, $R(A)\overset{.}{+} R(B)$ and  $R(A^*)\overset{.}{+} R(B^*)$ are closed and so $R(A+B)$ is also closed. Therefore, in addition,  $ P_{N(B)^\bot}P_{N(A)} $  and $P_{N(A^*)}P_{N(B^*)^\bot}$ have closed ranges because of \cite[Theo. 22]{Deu}.
In order to prove that $X$ is the Moore-Penrose inverse of $A$ is suffices to prove that $ AX=P_{R(A)},$ $ XA=P_{R(A^*)}$ and $ XAX=X$.   In our case, we shall prove: 
\begin{enumerate}
\item[i)] $(A+B)((I-S)A^\dagger (I-T)+SB^\dagger T)=P_{R(A+B)}$ 
\item[ii)] $((I-S)A^\dagger (I-T)+SB^\dagger T) (A+B)=P_{R(A^*+B^*)}.$ 
\item[iii)] $((I-S)A^\dagger (I-T)+SB^\dagger T) (A+B)((I-S)A^\dagger (I-T)+SB^\dagger T)=(I-S)A^\dagger (I-T)+SB^\dagger T.$ 
\end{enumerate}
By Theorem \ref{PQ+}, we have that 
$S=Q_{P_{N(A)}(N(B)^\bot)//N(B)}$ and $T=Q_{R(B)//R(A)+R(A)^\bot\cap R(B)^\bot}.$ Therefore,

\begin{enumerate}
\item[i)] After computations, we obtain that: $(A+B)((I-S)A^\dagger (I-T)+SB^\dagger T)=Q_1+T$ where $Q_1=Q_{R(A)//R(B)+R(A)^\bot\cap R(B)^\bot}.$ 
Therefore:
\begin{enumerate}
\item Since $Q_1 T=TQ_1=0$ then $Q_1+T$ is a projection.
\item  Clearly, $R(Q_1+T)\subseteq R(A+B).$ On the other side, as $(Q_1+T)(A+B)=A+B$ we get the other inclusion, and so $R(Q_1+T)=R(A+B).$
\item Finally, as $R(A+B)^\bot=R(A)^\bot\cap R(B)^\bot\subseteq N(Q_1+T)$ we obtain that $Q_1+T=P_{R(A+B)}$ as desired.
\end{enumerate}  
\item[ii)] After computations, we obtain that: $((I-S)A^\dagger (I-T)+SB^\dagger T)(A+B)=I-(I-S)P_{N(A)}$.
\begin{enumerate}
\item Notice that $(I-P_{N(A)})(I-S)=I-P_{N(A)},$ then $(I-S)P_{N(A)}=P_{N(A)}(I-S)P_{N(A)}=(I-S)P_{N(A)}(I-S)$ and so $(I-(I-S)P_{N(A)})^2=I-(I-S)P_{N(A)}.$
\item  Clearly, $N(A)\cap N(B)\subseteq N(A+B)\subseteq N(I-(I-S)P_{N(A)}).$ For the other inclusion, if $x\in N(I-(I-S)P_{N(A)})$ and since $(I-S)P_{N(A)}=(I-S)P_{N(A)}(I-S)$ we have that $x=(I-S)P_{N(A)}x\in N(B)$ and $x=P_{N(A)}(I-S)P_{N(A)}x\in N(A),$ i.e., $x\in N(A)\cap N(B).$ Therefore,  $ N(I-(I-S)P_{N(A)})=N(A)\cap N(B)=(R(A^*)+R(B^*))^\bot=R(A^*+B^*)^\bot.$ 
\item Finally, as $I-(I-S)P_{N(A)}(A^*+B^*)=A^*+B^*$ we get that $R(A^*+B^*)\subseteq R(I-(I-S)P_{N(A)})$ and so, by the previous items, we conclude that $((I-S)A^\dagger (I-T)+SB^\dagger T)(A+B)=I-(I-S)P_{N(A)}=P_{R(A^*+B^*)}$ as desired. 
\end{enumerate}
\item[iii)] As $((I-S)A^\dagger (I-T)+SB^\dagger T)(A+B)=I-EP_{N(A)}$ where $E=Q_{N(B)//P_{N(A)}(N(B)^\bot)}$. Then, $((I-S)A^\dagger (I-T)+SB^\dagger T) (A+B)((I-S)A^\dagger (I-T)+SB^\dagger T)=(I-EP_{N(A)})((I-S)A^\dagger (I-T)+SB^\dagger T)=(I-S)A^\dagger (I-T)+SB^\dagger T$ because $EP_{N(A)}((I-S)A^\dagger (I-T)+SB^\dagger T)=0$ since $EP_{N(A)}S=0=EP_{N(A)}A^\dagger.$
\end{enumerate}
\end{proof}

\begin{rem}
Fill and Fishkind proved their formula under the hypothesis $rk(A+B)=rk(A)+rk(B)$ where $A,B$ are $n\times n-$complex matrices and $rk$ denotes the rank. It is well known that this rank additivity is equivalent to $R(A)\cap R(B)=R(A^*)\cap R(B^*)=\{0\}.$ Moreover,  by Proposition \ref{sad},  $R(A)\cap R(B)=R(A^*)\cap R(B^*)=\{0\}$ is equivalent (for matrices) to $(A,B), (A^*,B^*)\in\cR.$ Thus, there is no loss in this generalization. For a quite different set of hypothesis for Fill-Fishkind formula in Hilbert spaces, see the paper by Deng \cite{Deng}.
\end{rem}

\end{document}